\newtheorem{thm}{Theorem}
\newtheorem{lem}[thm]{Lemma}
\newtheorem{cor}[thm]{Corollary}
\newtheorem{prp}[thm]{Proposition}
 \newtheorem{df}[thm]{Definition}
 \newtheorem{q}[thm]{Question}
 \newtheorem{rem}[thm]{Remark}
 \newtheorem{example}[thm]{Example}
 \newtheorem{examples}[thm]{Examples}
\newtheorem{clm} [thm] {Claim}
\newenvironment{proof-thm3}{\paragraph{Proof of Theorem 3}}{}
\newenvironment{proof-prp12}{\paragraph{Proof of Proposition 12}}{}
\newenvironment{ack}{\paragraph{Acknowledgements :}}{}
 \newcommand{\esp}{\textrm{  }}
 \newcommand{\n}{\vert\vert}
\def\bsq{\blacksquare\medskip}
\def\n{\noindent}
\def\H{\mathcal{H}}
\def\R{\mathbb{R}}
\def\lp{\ell_{p}}
\def\l2{\ell_{2}}
\def\Lp{L_{p}([0,1])}
\def\Lp(X){L_{p}(X,\mu)}
\def\pc{p_{c}}
\def\T{(T)}
\def\Tlp{(T_{\ell_{p}})}
\def\Flp{(F_{\ell_{p}})}
\def\H1{H^{1}}
\def\n{\vert\vert}
\begin{document}

\title{Fixed-point spectrum for group actions by affine isometries on $L_{p}$-spaces.
\footnote{This work will appear at Annales de l'Institut Fourier.}}
\author{Omer Lavy and Baptiste Olivier \footnote{This work was done while both authors worked at the TECHNION. The second author was supported by ERC Grant no. 306706.}}
\date{}
\maketitle
\begin{abstract}
The fixed-point spectrum of a locally compact second countable group $G$ on $\ell_{p}$ is defined to be the set of $p\geq1$ such that every action by affine isometries of $G$ on $\ell_{p}$ admits a fixed-point. We show that this set is either empty, or is equal to a set of one of the following forms : $[1,\pc[$, $[1,\pc[\backslash\{2\}$  for some $1\leq\pc\leq\infty$, or $[1,\pc]$, $[1,\pc]\backslash\{2\}$ for some $1\leq\pc<\infty$. This result is closely related to a conjecture of C. Drutu which asserts that the fixed-point spectrum is connected for isometric actions on $L_{p}(0,1)$.\\
More generally, we study the topological properties of the fixed-point spectrum on $L_{p}(X,\mu)$ for general measure spaces $(X,\mu)$, and show partial results toward the conjecture for actions on $L_{p}(0,1)$.
In particular, we prove that the spectrum associated with actions with linear part $\pi$ is either empty, or an interval of the form $[1,p_{c}]$ ($p_{c}\geq1$) or $[1,\infty[$, whenever $\pi$ is an orthogonal representation associated to a measure-preserving ergodic action on a finite measure space $(X,\mu)$. 
\end{abstract}


\section{Introduction}

Group actions on Banach spaces is a large topic related to many areas of mathematics : group cohomology, Kazhdan's property $(T)$, and fixed-point properties. In \cite{BFGM}, Bader, Furman, Gelander and Monod studied group actions by isometries on Banach spaces. The authors of \cite{BFGM} provide several results concerning property $(F_{L_{p}(0,1)})$, the fixed-point property for group actions by affine isometries on the space $L_{p}([0,1],\lambda)$ (abbreviated $L_{p}(0,1)$), where $p\geq1$ and $\lambda$ denotes the Lebesgue measure. In particular, they show that a locally compact second countable group with the fixed-point property $(F_{L_{p}(0,1)})$ has the Kazhdan's property $(T)$ when $p>1$, and that these properties are equivalent when $1< p\leq2$ (see Theorem A and Theorem 1.3 in \cite{BFGM}). Later the main result of \cite{BGM} was that these properties are equivalent when $1\leq p\leq2$ \\

Some groups have property $(F_{L_{p}(X,\mu)})$ for all $p\geq1$ and all standard measure space $(X,\mu)$: a more general result states that higher rank groups have property $(F_{L_{p}(\mathcal{M})})$ for all $p\geq1$ and all von Neumann algebra $\mathcal{M}$ (see Theorem B in \cite{BFGM}, Theorem 1.6 in \cite{O1}, and \cite{LS} for a stronger result). See also \cite{M1} and \cite{M2} where analogous 
results are established for universal lattices $SL_{n}(\mathbb{Z}[x_{1},...,x_{k}])$ ($n\geq4$).\\

On the other hand, there exist Kazhdan groups which do not have property $(F_{L_{p}(0,1)})$ for some $p>2$. For instance, hyperbolic groups (and among them co-compact lattices in $Sp(n,1)$ have property $(T)$) admit a metrically proper action by affine isometries on $\ell_{p}$, as well as on $L_{p}(0,1)$, for $p$ large enough (see \cite{Yu} and \cite{Ni}).\\

The main motivation of this article is the following conjecture of C. Drutu: for every topological group $G$, there exists $\pc\geq 1$ ($p_c$ can be also infinite), such that $G$ has property $(F_{L_{p}(0,1)})$ for $1\leq p<\pc$, and $G$ does not have property $(F_{L_{p}(0,1)})$ for $p>\pc$. further more, if $p_c>1$, then $p_c >2$ (see Question 1.8 in the introduction of \cite{CDH}). It seems that this question has its root in an older question raised by M. Gromov (see \cite{G} D.6 p158). We introduce the following set and we use a similar terminology as in \cite{No}.
\begin{df}\label{df-spectrum}{\rm
Let $(X,\mu)$ be a standard Borel measure space, and let $G$ be a topological group. The set
$$\mathcal{F}_{L_{\infty}(X,\mu)}(G)=\{\esp p\geq 1\esp \vert \esp G\textrm{ has property }(F_{L_{p}(X,\mu)})\esp\}$$
is called the fixed-point spectrum of $G$ (for affine isometric actions) on $L_{p}(X,\mu)$-spaces.
}
\end{df}
We use the notation $\mathcal{F}_{L_{\infty}(X,\mu)}(G)$ since it makes sense in a more general context, that is for actions on non-commutative $L_{p}$-spaces; in that case $L_{\infty}(X,\mu)$ is replaced by a general von Neumann algebra $\mathcal{M}$, and the fixed-point spectrum is denoted by $\mathcal{F}_{\mathcal{M}}(G)$.\\

The conjecture of C. Drutu can be rephrased as follows : the set $\mathcal{F}_{L_{\infty}(0,1)}(G)$ is connected for all group $G$. Our first main result is an answer to the same question, when replacing $([0,1],\lambda)$ by a discrete measure space. We denote by $\ell_{\infty}$ the space of all bounded infinite sequences of complex numbers, and $\ell_{p}$ the space of $p$-summable sequences in $\ell_{\infty}$, equipped with the $p$ norm. 

\begin{thm}\label{thm-main-lp}
Let $G$ be a locally compact second countable group. Then one of the following equalities holds :\\
- $\mathcal{F}_{\ell_{\infty}}(G)=\emptyset$;\\
- $\mathcal{F}_{\ell_{\infty}}(G)=[1,\pc[$ for some $1< \pc\leq\infty$;\\
- $\mathcal{F}_{\ell_{\infty}}(G)=[1,\pc]$ for some $1\leq\pc<\infty$;\\
- $\mathcal{F}_{\ell_{\infty}}(G)=[1,\pc[\backslash\{2\}$ and $2<\pc\leq\infty$;\\
- $\mathcal{F}_{\ell_{\infty}}(G)=[1,\pc]\backslash\{2\}$ and $2<\pc<\infty$.
\end{thm}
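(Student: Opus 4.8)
The plan is to combine the rigidity of the isometry group of $\ell_{p}$ for $p\neq 2$ with the Mazur map, while treating the exponent $2$ on its own because $\ell_{2}$ is the unique Hilbertian member of the family. First I would reduce every affine isometric action on $\ell_{p}$ with $p\neq 2$ to its linear part. By the Banach--Lamperti theorem the linear isometries of $\ell_{p}$ ($p\neq 2$) are exactly the signed permutations $(x_{n})\mapsto(\varepsilon_{n}x_{\sigma(n)})$ with $\sigma$ a permutation of $\mathbb{N}$ and $\abs{\varepsilon_{n}}=1$; the resulting group $\mathcal{G}$ is independent of $p$ and acts simultaneously on every $\ell_{r}$. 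Hence the linear part of any affine action on $\ell_{p}$ ($p\neq 2$) is a continuous homomorphism $\pi\colon G\to\mathcal{G}$, and the same $\pi$ yields a linear isometric action on all the spaces $\ell_{r}$ at once. The transfer tool is the Mazur map $M_{q,p}\colon\ell_{q}\to\ell_{p}$, $M_{q,p}(x)=(\,\abs{x_{n}}^{q/p}x_{n}/\abs{x_{n}}\,)_{n}$, which is $\mathcal{G}$-equivariant (elements of $\mathcal{G}$ act coordinatewise up to a permutation, so they commute with the coordinatewise map $M_{q,p}$), satisfies the scaling identity $\norm{M_{q,p}(x)}_{p}=\norm{x}_{q}^{q/p}$, and, for $q\le p$, is globally H\"older of exponent $q/p$.

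The technical heart is a downward-closure lemma: if $p\in\mathcal{F}_{\ell^{\infty}}(G)$ with $p\neq 2$ and $1\le q\le p$ with $q\neq 2$, then $q\in\mathcal{F}_{\ell^{\infty}}(G)$. I would argue by contraposition: starting from a continuous affine isometric action on $\ell_{q}$ with no fixed point, that is (using reflexivity of $\ell_{q}$ for $1<q<\infty$, so that no fixed point forces unbounded orbits) a cocycle $b\colon G\to\ell_{q}$ with $\sup_{g}\norm{b(g)}_{q}=\infty$ and linear part $\pi\in\mathcal{G}$, I want to produce an unbounded cocycle into $\ell_{p}$ for a signed-permutation linear part, witnessing the failure of $(F_{\ell_{p}})$. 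The scaling identity already shows that the set $\{M_{q,p}(b(g))\}$ is unbounded in $\ell_{p}$, so size is not lost. The main obstacle is that $M_{q,p}$ is nonlinear: $M_{q,p}\circ b$ is only an unbounded, coarsely $G$-equivariant family and not the orbit of an affine isometric action. Upgrading this uniform/coarse embedding datum back to an \emph{honest} unbounded affine isometric action on $\ell_{p}$ is where the real work lies; I expect to do this by reformulating everything through the associated $G$-invariant negative-type length kernels $g\mapsto\norm{b(g)}_{q}^{q}$ and transporting them across the Mazur correspondence, or by a linearization that builds the new cocycle as an $\ell_{p}$-sum of local pieces so that the H\"older control of the coboundary defect on each ball of $G$ is preserved. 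It is exactly here that one must avoid the exponent $2$, since the equivariance above is special to signed-permutation representations and fails for the full orthogonal group of $\ell_{2}$.

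It remains to locate the point $2$ and to supply one monotonicity input to its left. Since $\ell_{2}$ is a Hilbert space, the Delorme--Guichardet theorem gives $2\in\mathcal{F}_{\ell^{\infty}}(G)$ if and only if $G$ has Kazhdan's property $(T)$. For $1\le p\le 2$ the kernel $(x,y)\mapsto\norm{x-y}_{p}^{p}$ is conditionally negative definite on $\ell_{p}$; hence, given an unbounded cocycle $b\colon G\to\ell_{p}$ with signed-permutation linear part, the function $g\mapsto\norm{b(g)}_{p}^{p}$ is a conditionally negative definite function on $G$, and the affine isometric Hilbert action produced from it by the GNS construction is again unbounded. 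For $1<p\le 2$ this gives $\neg(F_{\ell_{p}})\Rightarrow\neg(T)$, i.e.\ $(T)$ forces $(1,2]\subseteq\mathcal{F}_{\ell^{\infty}}(G)$; the endpoint $p=1$ is then recovered from the downward-closure lemma, whose proof at $q=1$ must be carried out separately using the weak-$*$ compactness of bounded sets in $\ell_{1}=c_{0}^{*}$ rather than reflexivity. Thus property $(T)$ forces $[1,2]\subseteq\mathcal{F}_{\ell^{\infty}}(G)$, which in particular rules out the isolated set $\{2\}$.

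Granting the lemma, $\mathcal{F}_{\ell^{\infty}}(G)\setminus\{2\}$ is downward closed in $[1,\infty)\setminus\{2\}$ and, if nonempty, contains $1$; so it equals $[1,c)\setminus\{2\}$ or $[1,c]\setminus\{2\}$ with $c=\sup(\mathcal{F}_{\ell^{\infty}}(G)\setminus\{2\})$, a closed right end forcing $c<\infty$. If $G$ has $(T)$, then $[1,2]\subseteq\mathcal{F}_{\ell^{\infty}}(G)$, so $c\ge 2$ and $2$ lies in the spectrum, giving $[1,\pc[$ or $[1,\pc]$, the second and third listed forms. If $G$ does not have $(T)$, then $2\notin\mathcal{F}_{\ell^{\infty}}(G)$: for $c\le 2$ the deletion of $2$ is vacuous and we again obtain $[1,\pc[$ or $[1,\pc]$, while for $c>2$ the lemma forces $[1,2)\subseteq\mathcal{F}_{\ell^{\infty}}(G)$ together with exponents above $2$, yielding $[1,\pc[\backslash\{2\}$ or $[1,\pc]\backslash\{2\}$ with $2<\pc$, the fourth and fifth forms. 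The empty spectrum is the remaining possibility, and these five cases are exhaustive.
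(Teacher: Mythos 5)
Your reduction to signed-permutation linear parts, your handling of the exponent $2$ (Delorme--Guichardet, plus $(T)\Rightarrow(F_{L_p})$ for $p\le 2$ via negative-definite kernels), and your final case analysis all match what is needed; the problem is that the step you yourself call ``the technical heart'' --- the downward-closure lemma --- is exactly what you do not prove, and it is the entire content of the theorem. You set up a contrapositive: from an unbounded cocycle $b$ on $\ell_q$ produce an unbounded affine isometric action on $\ell_p$ for $q<p$. You correctly observe that $M_{q,p}\circ b$ is not a cocycle, concede that upgrading this coarse datum to an honest action ``is where the real work lies,'' and then offer two unexecuted strategies. Neither can be salvaged as stated. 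The kernel strategy is doomed in principle: the kernel $\|x-y\|_q^q$ is of conditionally negative type only for $q\le 2$ (so it does not even start when $2<q<p$), and, worse, any construction whose input is only this Hilbertian datum (equivalently, the unbounded Hilbert-space action it generates) cannot output a fixed-point-free action on $\ell_p$, $p\ne 2$: the group $SL_2(\mathbb{R})$ is non-Kazhdan, hence admits unbounded affine isometric actions on Hilbert space, yet by the paper's Theorem 9 it has $(F_{\ell_p})$ for every $p\ne 2$. So no general transfer ``unbounded Hilbert action $\Rightarrow$ unbounded $\ell_p$-action'' exists. The second strategy (``linearization into $\ell_p$-sums of local pieces'') is too vague to evaluate; turning coarsely equivariant data into an exact cocycle is precisely the notoriously hard step, not a routine one.

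The paper's proof shows why no such construction is needed, and this is the idea your outline misses: it transports the \emph{potential}, not the action. For $p<q$ one has $\ell_p\subset\ell_q$, and since (BL) linear parts are signed permutations acting compatibly on every $\ell_r$, a cocycle $b$ at the smaller exponent is verbatim a cocycle at the larger exponent, where by hypothesis $b(g)=x-\pi(g)x$ for some $x\in\ell_q$; the lemma then reduces to showing that $x$ already lies in $\ell_p$. This is done after decomposing the underlying countable set into the union $X'$ of finite orbits and the union $X''$ of infinite orbits. On $X''$ one passes to $|x|$ and the positive representation $|\pi|$, uses the equivariance of the Mazur map under positive (BL) isometries together with estimate (2) to produce a cocycle for $|\pi|$ in $\ell_q$, and concludes $|x|=M_{q,p}(y)\in\ell_p$ from the absence of invariant vectors; on $X'$, where invariant vectors do exist and that argument fails, one instead truncates $x$ orbit by orbit and bounds the truncations using the uniform gap coming from the absence of almost invariant vectors (Guichardet plus the Mazur-map transfer of almost invariance); the endpoint $p=1$ requires a separate limit argument (the paper's Propositions 3 and 4). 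Note also that on the infinite-orbit part the paper can only control $\mathcal{F}_{\ell^{\infty}}(G,\pi'')\cap\mathcal{F}_{\ell^{\infty}}(G,|\pi''|)$ rather than a single representation's spectrum --- a wrinkle invisible from your sketch, harmless for the full spectrum $\mathcal{F}_{\ell^{\infty}}(G)$ but indicative of where positivity enters. As it stands, your proposal establishes the reduction and the endgame, but the lemma that carries the theorem is missing, and the routes you propose for it either provably fail or are not arguments yet.
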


Hence the spectrum $\mathcal{F}_{\ell_{\infty}}(G)$ can be the empty set, an interval, or the union of two disjoint intervals : we will show that these three situations can occur, depending on the group $G$ considered.\\

In the special case of countable groups, a similar result as Theorem \ref{thm-main-lp} was also proved independently of our work in \cite{C}, for the case of discrete countable groups and real $\ell_{p}$-spaces. Although some ideas in our proof are similar to the ones in \cite{C} (see section 4.1 for further precisions), our overall approach is different: Theorem \ref{thm-main-lp} is obtained by studying the topological properties of the fixed-point spectrum, while the approach in \cite{C} proves relationships with expander graphs theory. As shown by our Theorem \ref{thm-main-mp} and results from section 5 discussed below, the approach of the current paper permits to go beyond $\ell_{p}$-spaces, and generalizes to other $L_{p}$-spaces.\\


Let us now discuss the fixed-point spectrum of actions on general $L_{p}(X,\mu)$-spaces. We refer to section 2 for basic facts and definitions related to isometric group representations on $L_{p}$-spaces. Let $1\leq p<\infty$, $p\neq2$. Let $\pi^{p}:G\rightarrow O(L_{p}(X,\mu))$ be an orthogonal representation. There is a natural family $(\pi^{q})_{q\geq1}$ of orthogonal representations $\pi^{q}:G\rightarrow O(L_{q}(X,\mu))$ associated to $\pi^{p}$, namely the conjugate representations of $\pi^{p}$ by the Mazur maps $M_{p,q}$. Referring to a theorem by Banach and Lamperti recalled in Section 2, we will call \textit{(BL) representations} the representations of the form $\pi^{q}$, $1\leq q<\infty$. In the sequel, we will distinguish (BL) orthogonal representations from other possible classes of orthogonal representations in the case $q=2$. 
Then we can define the fixed-point spectrum of affine isometric actions with linear parts $(\pi^{q})_{q\geq1}$ as 
$$\mathcal{F}_{L_{\infty}(X,\mu)}(G,(\pi^{q})_{q\geq1})=\{\esp q\geq1\esp\vert\esp \H1(G,\pi^{q})=\{0\} \esp\}$$ 
where $\H1(G,\pi^{q})$ is the first cohomology group of $G$ with coefficients in $\pi^{q}$ (see section 2.3 for a definition). For simplicity in notations, we will denote $\mathcal{F}_{L_{\infty}(X,\mu)}(G,(\pi^{q})_{q\geq1})$ by $\mathcal{F}_{L_{\infty}(X,\mu)}(G,\pi^{p})$ for $p\neq2$ (but this set does not depend on $p$). In the sequel, we will say that $\pi^{p}$ is measure-preserving (resp. ergodic) if the associated action on $(X,\mu)$ is measure-preserving (resp. ergodic). We say that $\pi^{p}$ is positive if $\pi^{p}(g)f\geq 0$ for all $g\in G$ and all $f\geq 0$, $f\in L_{p}(X,\mu)$.\\

The following theorem describes the form of the spectra $\mathcal{F}_{L_{\infty}(X,\mu)}(G,\pi)$ relative to measure-preserving ergodic actions.

\begin{thm}\label{thm-main-mp}
Let $G$ be a second countable locally compact group. Let $(X,\mu)$ be a finite measure space, and $\pi:G\rightarrow O(L_{p}(X,\mu))$ be a (BL) measure-preserving ergodic representation. Then we have:\\
- The spectrum $\mathcal{F}_{L_{\infty}(X,\mu)}(G,\pi)$ is an interval or is empty.\\
- If $G$ has property $(T)$, the spectrum 
$\mathcal{F}_{L_{\infty}(X,\mu)}(G,\pi)$ is an interval of the form $[1,p_{c}]$ or $[1,\infty[$ for some $p_{c}>2$.
\end{thm}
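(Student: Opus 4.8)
The plan is to reduce everything to the representations on the functions of integral zero and then to understand how the vanishing of the first cohomology varies with $q$. Since $\pi$ is measure-preserving, for every $g$ the operator $\pi^{q}(g)$ is the same Koopman operator $f\mapsto f\circ g^{-1}$, acting isometrically on $L_{q}(X,\mu)$ for all $1\le q<\infty$ simultaneously, and in particular the Mazur maps intertwine the $\pi^{q}$ for different $q$. Because $(X,\mu)$ is finite and the action is ergodic, the space of $\pi^{q}$-invariant vectors is exactly the line of constants $\C\mathbf{1}$, and $E_{0}f=f-\frac{1}{\mu(X)}\left(\int_{X}f\,d\mu\right)\mathbf{1}$ is a bounded $G$-equivariant projection onto the complement $L_{q}^{0}=\{f:\int_{X}f\,d\mu=0\}$. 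This splits the cohomology as $H^{1}(G,\pi^{q})\cong\mathrm{Hom}(G,\C)\oplus H^{1}(G,\pi^{q}_{0})$, where $\pi^{q}_{0}=\pi^{q}|_{L_{q}^{0}}$ has no nonzero invariant vector and the term $\mathrm{Hom}(G,\C)$ does not depend on $q$. Hence if $\mathrm{Hom}(G,\C)\neq\{0\}$ the spectrum is empty, and otherwise $\mathcal{F}_{L^{\infty}(X,\mu)}(G,\pi)=\{q:H^{1}(G,\pi^{q}_{0})=\{0\}\}=:S$, which is the object I will study.

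For the first assertion I would prove that $S$ is convex, which for a subset of $[1,\infty[$ is the same as being an interval. For $1<q<\infty$ the space $L_{q}^{0}$ is uniformly convex, so a circumcenter argument gives $q\in S$ if and only if every $\pi^{q}_{0}$-cocycle has bounded orbit. I would then establish the key transfer statement: if $q_{1}<q_{2}$ both lie in $S$ and $1/q_{\theta}=(1-\theta)/q_{1}+\theta/q_{2}$, then $q_{\theta}\in S$. The idea is to view a $\pi^{q_{\theta}}_{0}$-cocycle inside the interpolation scale $L_{q_{\theta}}=[L_{q_{1}},L_{q_{2}}]_{\theta}$, on which the Koopman representation acts compatibly, and to control its orbit by those of associated cocycles at the two endpoints, where boundedness is granted; the Hölder estimates for the Mazur maps $M_{q,r}$ are what let one pass between the scales quantitatively. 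I expect this transfer lemma to be the main obstacle, precisely because a cocycle is an affine object whereas the Mazur maps are nonlinear, so one cannot simply push a cocycle forward: the work lies in producing a genuine intermediate coboundary, equivalently a uniform bound on the intermediate orbit, out of the endpoint data. The value $q=1$, where uniform convexity fails, enters only in describing which endpoint of the interval is attained and is handled by the closedness discussion below. I also note that, because all the $\pi^{q}$ form the single Koopman scale coupled by the Mazur maps, the point $q=2$ is in no way decoupled, which is why—unlike in Theorem \ref{thm-main-lp}—it is never removed and $S$ is a genuine interval.

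For the second assertion, assume $G$ has property $(T)$. Then $\mathrm{Hom}(G,\C)=\{0\}$, so $\mathcal{F}_{L^{\infty}(X,\mu)}(G,\pi)=S$. Through the conditionally negative definite function $g\mapsto\norm{b(g)}_{q}^{q}$ (valid for $1\le q\le 2$, as in \cite{BFGM}), property $(T)$ forces every affine isometric action on $L_{q}$ with $1\le q\le 2$ to have a fixed point, so $[1,2]\subseteq S$; in particular $S\neq\emptyset$ and, by the first part, $S=[1,\pc]$, $S=[1,\pc[$, or $S=[1,\infty[$ with $\pc\ge 2$. To upgrade this I would exploit the spectral gap: property $(T)$ yields a spectral gap for $\pi^{2}_{0}$, and for a measure-preserving ergodic action a spectral gap on $L_{2}^{0}$ propagates to one on $L_{q}^{0}$ for every $1<q<\infty$. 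A spectral gap makes the coboundaries closed, so $H^{1}(G,\pi^{q}_{0})=\{0\}$ if and only if $\overline{H}^{1}(G,\pi^{q}_{0})=\{0\}$ throughout $]1,\infty[$. Two consequences then finish the proof. First, a perturbation argument at $q=2$—the Mazur maps depend continuously on the exponent near $2$, and the spectral gap supplies the uniform bound controlling the perturbation—shows $2+\varepsilon\in S$ for some $\varepsilon>0$, hence $\pc>2$. Second, the uniform spectral gap gives a cocycle-to-coboundary bound stable under letting $q\uparrow\pc$, so $\pc\in S$ whenever $\pc<\infty$, ruling out the half-open case and leaving exactly $S=[1,\pc]$ with $\pc>2$ or $S=[1,\infty[$. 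This closedness of the right endpoint, resting on uniform spectral-gap control across the scale, is, together with the interpolation transfer lemma, where the real difficulty is concentrated.
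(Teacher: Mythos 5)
Your reduction is sound: splitting off the constants (legitimate, since for finite $\mu$ the mean projection is a bounded equivariant projection for every $q$), so that the spectrum is empty when ${\rm Hom}(G,\C)\neq\{0\}$ and otherwise equals $S=\{\,q\ \vert\ H^{1}(G,\pi^{q}_{0})=\{0\}\,\}$, and you correctly isolate the statement everything rests on: if $q_{1}<q_{\theta}<q_{2}$ with $q_{1},q_{2}\in S$, then $q_{\theta}\in S$. But you do not prove this statement -- you call it ``the main obstacle'' and leave it as such -- and the route you sketch for it would fail. Viewing $b$ ``inside the interpolation scale'' and controlling it ``by the associated cocycles at the two endpoints'' presupposes that a cocycle at $q_{\theta}$ induces cocycles at both endpoints; it does not. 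Since $\mu$ is finite, $L_{q_{2}}\subset L_{q_{\theta}}\subset L_{q_{1}}$, so a cocycle with values in $L_{q_{\theta}}$ is verbatim a cocycle with values in $L_{q_{1}}$ (the operators $\pi^{q}(g)$ being the same for all $q$), but there is no associated cocycle at the upper endpoint: the values of $b$ need not lie in $L_{q_{2}}$, and the Mazur map cannot be applied to $b$ itself because, as you note, it is nonlinear. The paper's Proposition \ref{prp-mpinterval} resolves this by pushing the potential rather than the cocycle: solve $b(g)=f-\pi^{q_{1}}(g)f$ with $f\in L_{q_{1}}$ (lower endpoint, using nestedness), then observe that pointwise $\bigl\vert\,\vert f\vert-\vert\pi(g)f\vert\,\bigr\vert\leq\vert b(g)\vert$, so by the H\"older estimate (2) for Mazur maps the differences $\vert f\vert^{q_{\theta}/q_{2}}-\vert\pi\vert(g)\vert f\vert^{q_{\theta}/q_{2}}$ lie in $L_{q_{2}}$ and form a cocycle there; vanishing at $q_{2}$ plus ergodicity forces $\vert f\vert^{q_{\theta}/q_{2}}=c+y$ with $c$ constant and $y\in L_{q_{2}}$, hence $f\in L_{q_{\theta}}$ (finiteness of $\mu$ again) and $b$ is a coboundary at $q_{\theta}$. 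This single-function trick is exactly the missing answer to your nonlinearity objection, and without it your first assertion has no proof.

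For the second assertion, $[1,2]\subseteq S$ via \cite{BFGM} is fine, but the two remaining steps are also only asserted, and the first rests on a flawed mechanism. For a measure-preserving action the operators $\pi^{q}(g)$ are literally the same for every $q$; what varies is the ambient norm, not the representation, so a ``perturbation argument at $q=2$ controlled by the spectral gap'' has nothing to perturb: deformation results (such as those of \cite{BN}, which the paper can only invoke under finite presentability and reduced $H^{2}$) concern varying operators on a fixed space. What actually yields $p_{c}>2$ is the Fisher--Margulis ultraproduct theorem (Theorem \ref{thm-FisherMargulis}), i.e.\ openness at $2$ of the full fixed-point spectrum of a Kazhdan group, used as a black box together with Theorem 1.3 of \cite{BFGM}; you would need to cite it in the same way, since your sketch does not reconstruct it. Likewise, closedness at the right endpoint is not a consequence of an unproved ``uniform spectral gap across the scale'': it is the content of Proposition \ref{prp-closeness}, whose proof restricts a cocycle at $p_{c}$ to exponents $q<p_{c}$ (nestedness once more), solves it there, and rules out unboundedness of the solutions by manufacturing almost invariant vectors through Mazur maps whose H\"older constants are uniform as $q\to p_{c}$ (Lemma \ref{lem-distance} and Proposition \ref{prp-aiv}); that uniformity is precisely the technical point your sketch takes for granted.
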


Theorem \ref{thm-main-mp} is in contrast with the following well-known fact (detailed in section 5.2): if $G$ does not have property $(T)$, there exists some (BL) measure-preserving ergodic representation $\rho:G\rightarrow O(L_{p}(X,\mu))$
such that $\mathcal{F}_{L_{\infty}(X,\mu)}(G,\rho)$ is empty.\\

Theorems \ref{thm-main-lp} and \ref{thm-main-mp} are in the spirit of C. Drutu's conjecture. Other partial results in that direction are proved in the paper. We study topological properties of the fixed-point spectrum. We discuss general arguments for closedness (resp. openness) of fixed-point spectra in section 3 (resp. section 5.1). Moreover, we use recent results from \cite{BN} about deformation of cohomology to show the connectedness of spectra $\mathcal{F}_{L_{\infty}(0,1)}(G,\pi)$ under some additional assumptions on $G$ and $\pi$.\\


The paper is organized as follows. In Section 2, we recall general facts and results we will need about isometric group actions on $L_{p}$-spaces. In Section 3, we show general results concerning the closedness of the fixed-point spectrum. Section 4 is devoted to the proof of Theorem \ref{thm-main-lp}. In Section 5, we prove various partial results toward a proof of the conjecture of C. Drutu concerning the case of $L_{p}(0,1)$.


\begin{ack}
Authors are grateful to V. Lafforgue for many comments that improve the quality of the paper. They also would like to thank U. Bader, B. Bekka, T. Gelander, B. Nica, E. Ricard and the anonymous reviewers for valuable comments and interesting discussions on preliminary versions of this work.  Both authors wish to thank the Technion, where most of this work was achieved, for the good atmosphere and working conditions. 
\end{ack}

%
%

\section{Isometric group actions on $L_{p}(X,\mu)$-spaces}

In this section, we recall some general definitions and properties of linear and affine isometric actions on $L_{p}(X,\mu)$-spaces. Let $G$ be a topological group, and $(X,\mu)$ be a standard Borel measure space.

\subsection{Orthogonal representations on $L_{p}(X,\mu)$}

Let $1\leq p<\infty$, $p\neq2$, and denote by $O(L_{p}(X,\mu))$ the group of bijective linear isometries of the space $L_{p}(X,\mu)$. By a theorem of Banach and Lamperti, elements in $O(L_{p}(X,\mu))$ are the linear maps $U:L_{p}(X,\mu)\rightarrow L_{p}(X,\mu)$ described as follows :
$$Uf(x)=h(x)\esp (\frac{d\varphi*\mu}{d\mu}(x))^{1/p}\esp f(\varphi(x))  \esp\esp  (\ast)$$ 
where $h:X\rightarrow \mathbb{C}$ is a measurable function of modulus one, and $\varphi:X\rightarrow X$ is a $\mu$-class-preserving bijective transformation. In the particular case where $(X,\mu)$ is a discrete measure space, $\varphi$ is a permutation of the countable set $X$.\\

An orthogonal representation $\pi$ of the group $G$ on $L_{p}(X,\mu)$ is a group homomorphism $\pi:G\rightarrow O(L_{p}(X,\mu))$ such that the maps $g\mapsto\pi(g)f$ are continuous on $G$ for all $f\in L_{p}(X,\mu)$. For $1\leq p<\infty$ and $p\neq 2$, each element $\pi(g)$, $g\in G$, is given by formula $(\ast)$, and we call $\pi$ a (BL) representation. For $p=2$, the isometry group $O(L_{2}(X,\mu))$ contains much more elements than (BL) representations described by formula $(\ast)$ in the Banach-Lamperti theorem. Orthogonal representations on $L_{2}(X,\mu)$ which are (BL) representations will play a central role in our study.\\

In the current paper, the set of $\pi(G)$-invariant vectors is denoted by $L_{p}(X,\mu)^{\pi(G)}$. Recall that when $p=2$, the orthogonal complement $L_{2}'(X,\mu)$ to $L_{2}(X,\mu)^{\pi(G)}$ is also a $\pi(G)$ invariant space. Then the space $L_{2}(X,\mu)$ can be written as a direct sum of $\pi(G)$-invariant subspaces, $L_{2}(X,\mu) = L_{2}(X,\mu)^{\pi(G)} \oplus L_{2}'(X,\mu)$. The authors of \cite{BFGM} described a similar decomposition for $L_{p}(X,\mu)$, $p\neq2$, which we recall now.
 
For $p>1$, and $p'=p/(p-1)$, the contragradient representation $\pi^{*}:G\rightarrow O(L_{p'}(X,\mu))$ is defined by duality as follows:
$$<x,\pi^{*}(g)y>=<\pi(g^{-1})x,y> \textrm{ for all }g\in G,x\in L_{p}(X,\mu), y\in L_{p'}(X,\mu).$$
Then we define the following $\pi(G)$-invariant decomposition of the space $L_{p}(X,\mu)$ :
$$L_{p}(X,\mu)=L_{p}^{\pi(G)}\oplus L_{p}'(\pi)$$
where $L_{p}'(\pi)=\{\esp f\in L_{p}\esp\vert\esp \forall h\in L_{p'}^{\pi^{*}(G)}, <f,h>=0\esp\}$ is the annihilator of the $G$-invariant vectors for the contragradient representation $\pi^{*}:G\rightarrow O(L_{p'})$ of $\pi$ (more details can be found in Proposition 2.6 and section 2.c in \cite{BFGM} ). Given such a decomposition, one can consider the restriction $\pi':G\rightarrow O(L_{p}'(\pi))$. When $p=1$, the analog of the previous representation is the representation $\pi':G\rightarrow O(L_{1}/L_{1}^{\pi(G)})$, obtained from $\pi$ by composing with the quotient projection. If the context is clear, we will keep the notation $\pi$ in place of $\pi'$ in the sequel.\\

Let $\pi:G\rightarrow O(L_{p}(X,\mu))$ be a (BL) orthogonal representation. We will denote $\vert \pi\vert:G\rightarrow O(L_{p}(X,\mu))$ the map defined by the following formula, extended to $L_{p}(X,\mu)$ by linearity:
$$\vert \pi\vert(g)f=\vert \pi(g)f\vert \textrm{ for all }f\in L_{p}(X,\mu), f\geq 0.$$

Notice that $\vert\pi\vert$ is a (BL) representation, since $\pi$ is a (BL) representation. The representation $\vert \pi \vert$ is obtained from $\pi$ by simply omitting the function $h$ appearing in formula $(\ast)$ (or equivalently taking it to be constant 1) for each element $\pi(g)$, $g \in G$. We will say that $\pi$ is positive in the case where $\pi=\vert\pi\vert$. Also, we will say that $\pi$ is measure-preserving if the corresponding action of $G$ on $(X,\mu)$ is $\mu$-preserving, equivalently if the associated Radon-Nikodym derivative is equal to $1$ almost everywhere. By Banach's description of the isometries of $\ell_{p}$, all (BL) representations on the space $\ell_{p}$ are measure-preserving (see section 2 in \cite{BO} for details).

\subsection{The Mazur map}

A very useful tool to study $L_{p}$-spaces and their representations is the Mazur map. Here we recall the definition and some well-known properties of this map. The proofs of the basic properties of the Mazur map can be found in Chapter 9.1 of \cite{BL}. For $f\in L_{p}(X,\mu)$, we will use notations $sg(f)$ and $\vert f\vert$ for the polar decomposition components of $f$, so that $f=sg(f)\vert f\vert$ for $sg(f):X\rightarrow \mathbb{C}$ of modulus one, and $\vert f\vert\geq 0$.\\

Let $1\leq p,q<\infty$. The map
\begin{displaymath}
\begin{split}
M_{p,q}:&\esp L_{p}(X,\mu)\rightarrow L_{q}(X,\mu)\\
&\esp f=sg(f)\vert f\vert\mapsto sg(f)\vert f\vert^{p/q}
\end{split}
\end{displaymath}
is called the Mazur map. It induces a uniformly continuous homeomorphism between the unit spheres of $L_{p}(X,\mu)$ and $L_{q}(X,\mu)$. More precisely, the Mazur map $M_{p,q}$ is $\min(1,\frac{p}{q})$-H\"{o}lder on the unit sphere, i.e. for all $1\leq p, q<\infty$ :
$$\n M_{p,q}(x)-M_{p,q}(y)\n_{q}\leq C_{p,q}\n x-y\n_{p}^{\theta_{p,q}}\esp \esp (1)$$
for all $x,y\in S(L_{p}(X,\mu))$, where the constant $C_{p,q}$ depends only on $p,q$, and $\theta_{p,q}=\min(1,\frac{p}{q})$ (see Theorem 9.1 in \cite{BL} and \cite{R}). The following remark \ref{rem universal C} will be helpful for our proofs, and it relies on the property stated in the following lemma.

\begin{lem}
\label{lem-BL}
Let $Q$ be some compact interval in $[1,\infty[$. There exists a choice of constants $C_{p,q}>0$ such that the statements below hold.
\begin{enumerate}
\item For $q\in Q$ and $p\in [1,\infty[$ , formula (1) above holds with $C_{p,q}$.
\item For $p\in [1,\infty[$, the constant $C_{p} = \sup_{q\in Q} C_{p,q} >0$ satisfies $C_{p} < \infty$.
\item For $p\in [1,\infty[$, formula (1) holds with $C_{p}$ in place of $C_{p,q}$ for all $q\in Q$.
\end{enumerate}
\end{lem}
\begin{proof}
The proof is the one given in the proof of Theorem 9.1 in \cite{BL}, with some additional observation. So we detail only the part which requires a new argument. For $p>q$, the proof in \cite{BL} shows that $C_{p,q} = p/q$ so our statement is clear.\\
Authors of \cite{BL} also shows the existence of some $c_{p,q}>0$ such that the inequality below holds
$$\n M_{p,q}(x)-M_{p,q}(y)\n_{q}\geq c_{p,q} \times \n x-y\n_{p}^{\theta_{p,q}},$$
and it follows inequality (1) for $p<q$ since $M_{p,q}^{-1}=M_{q,p}$ and by taking $C_{p,q}=(c_{p,q})^{-1}$.\\
To prove our statement, it is sufficient to show that constant $c_{p,q}$ can be made independent of $q$ on compact subsets $Q\subset[1, \infty[$. Following the lines of the proof in \cite{BL}, we show that we have $\vert a^{p/q}-\theta b^{p/q}  \vert \geq c_{p} \vert a-\theta b\vert^{p/q}$ for $c_{p}$ independent of $q$ over compact subsets, and for all $a\geq b\geq 0$, and $\vert \theta\vert=1$. \\
First we notice that it is sufficient to prove the existence of $c_{p}$ for $b=1$, and for all $1\leq a \leq M$, where $M>0$ is some positive constant. Indeed, the inequality for $b=0$ holds with $c_{p}=1$. And for $b\neq 0$ we can consider $a/b$ in place of $a$ and $b=1$. As $a/b$ goes to infinity, $\vert (a/b)^{p/q}-\theta  \vert$ is equivalent to $\vert (a/b)-\theta \vert^{p/q}$. Hence, there exists some constant $M>0$ such that for all $0<b\leq a$ with $a/b\geq M$, we have $\vert (a/b)^{p/q}-\theta   \vert \geq (1/2) \times \vert (a/b)-\theta \vert^{p/q}$. Hence we can take $c_{p}=1/2$ for cases $b=0$, or $b=1$ and $a\geq M$.\\
Now we assume $b=1$, $1\leq a \leq M$ for some $M>0$, and we fix a compact subset $Q\subset [1, \infty[$. As noticed in \cite{BL}, the constant $c_{p}^{1}=1$ solves the case $\vert a-\theta\vert \leq 1$. 
For the case $\vert a-\theta\vert \geq 1$, we find a constant $c_{p}^{2}>0$ as follows. We notice that the set 
$$K = \{  \esp (q, a, \theta) \in Q \times[1,M] \times \mathbb{S}^{1} \esp\esp\vert\esp\esp  \vert a-\theta\vert \geq 1  \esp \} $$
is compact. Hence, the continuous map
\begin{displaymath}
\begin{split}
K  &\rightarrow \R^{+*}\\
(q, a, \theta) & \mapsto  \frac{\vert a^{p/q} - \theta  \vert}{\vert a -\theta \vert^{p/q}}
\end{split}
\end{displaymath}
attains a minimum $c_{p}^{2}>0$ on the compact set $K$. To finish the proof, we set $c_{p} = \min(1/2, c_{p}^{1}, c_{p}^{2})$. $\bsq$
\end{proof}

\begin {rem} \label {rem universal C} {\rm
Assume that we have a sequence $(p_{n})_{n}$ such that $p_{n} \in [1, \infty [$ for all $n$, and $\lim_{n} {p_{n}} =p $ for some $p\geq1$. Then by Lemma \ref{lem-BL} we can find a constant $C_{p}>0$, which does not depend on $n$, and such that:
$$\n M_{p,p_n}(x) - M_{p,p_n}(y)\n_{p_n}\leq C_{p}\n x-y\n_{p}^{\theta_{p,p_n}}$$ for all $x,y \in S(L_p(X,\mu)$. 
In particular, if we have 
$$\lim_n{ \n M_{p_n,p}(x_n)-M_{p_n,p}(y_n)\n_{p}} =0$$ for some $x_n ,y_n \in S(L_{p_n}(X,\mu))$, then the equality $\lim_n\n {x_n-y_n}\n_{p_{n}} =0$ holds as well.
}
\end{rem}
Let $x,y$ be any positive numbers and assume $1 \leq p \leq q$. Then we have the following inequality
$$\vert x^{p/q}-y^{p/q} \vert \leq \vert x-y\vert^{p/q}. $$
The following inequality is obtained from the inequality above:
$$\n M_{p,q}(x)-M_{p,q}(y)\n_{q} \leq \n x-y\n_{p}^{p/q} \textrm{ for all }x,y\geq 0 \textrm{ s.t. } x-y \in L_{p}(X,\mu) \esp\esp (2).$$
See also Proposition 1.1.4 in \cite{O2}.\\


Now let $p\geq1$ and let $\pi^{p}:G\rightarrow O(L_{p}(X,\mu))$ be a (BL) orthogonal representation of $G$ on $L_{p}(X,\mu)$. Since $\pi^{p}$ is described by equation $(\ast)$, the formula
$$\pi^{q}(g)x=(M_{p,q}\circ \pi^{p}(g)\circ M_{q,p})x \textrm{ for all }g\in G, x\in L_{q}(X,\mu)\esp\esp(3)$$
define an orthogonal representation $\pi^{q}:G\rightarrow O(L_{q}(X,\mu))$.
Note that when the associated action on $(X,\mu)$ is measure preserving the representations $\pi^{p}$ and $\pi^q$ coincide on the intersection, $L_{p}(X,\mu) \cap L_{q}(X,\mu)$ \\

A sequence $(f_{n})_{n}$ in a Banach space $F$ is said to be a sequence of almost invariant vectors for an orthogonal representation $\pi:G\rightarrow O(F)$ if $\n f_{n}\n_{F}=1$ and 
$$\lim_{n}\n \pi(g)f_{n}-f_{n}\n_{F}=0\textrm{  uniformly on compact subsets of }G. $$
The following lemma was first stated in the proof of Theorem A in \cite{BFGM} (see Remark 4.3 in \cite{BFGM}). For the convenience of the reader, we recall the proof of this lemma, given in Proposition 2.5.1 of \cite{O2} in a more general setting.
\begin {lem} \label {T and Tlp}For a family $(\pi^{p})_{1\leq p<\infty}$ of (BL) orthogonal representations, and for all $1\leq p<\infty$, $1\leq q<\infty$, a group $G$ admits a sequence of almost invariant vectors for the representation $(\pi^{p})'$ if and only if it admits a sequence of almost invariant vectors for $(\pi^{q})'$\end{lem}
\begin{proof}
Let $f_n\in S(L_{p}'(\pi^{p}))$ be a sequence of almost invariant vectors. Note that for that the following inequalities hold (see Proposition 3.5 in \cite{O1}) : 
$$d(f_{n}, L_{p}^{\pi^{p(G)}})\geq1/2\textrm{ for all }n.$$
Notice that $L_{q}^{\pi^{q}(G)}=M_{p,q}(L_{p}^{\pi^{p}(G)})$. Define $ h_n = M_{p,q}f_n$. By the uniform continuity of the Mazur map then, $h_n$ is a sequence of unit vectors in $L_{q}$ with
$$d(h_{n},L_{q}(\pi^{q}))\geq \delta>0.$$
Furthermore we have for every compact set $Q \subset G$,
$$\lim_{n}\sup_{g\in Q}\n h_{n}-\pi^{q}(g)h_{n}\n_{q}=0.$$
Consider first the case $q>1$. Let $v_{n}$ denote the projections of $h_{n}$ on the complement subspace $L_{q}'(\pi^{q})$. Since $\n v_{n}\n_{q}\geq \delta$, the uniform convergence on compact subsets holds when replacing $h_{n}$ by $v_{n}/\n v_{n}\n_{p}$. Hence $(v_{n}/\n v_{n}\n_{p})_{n}$ is a sequence of almost invariant vectors for $\pi^{q}$ with values in $(\pi^{q})'$. \\
For $q=1$, we consider $v_{n}$ the projection of $h_{n}$ on the quotient space $F=L_{1}/L_{1}^{\pi^1(G)}$. Then $\n v_{n}\n_{F}\geq \delta$, and the sequence $v_{n}/\n v_{n}\n_{F}$ is a sequence of almost invariant vectors for the representation $(\pi^{1})':G\rightarrow O(F)$.
$\bsq$
\end{proof}

\subsection{Affine isometric actions on $L_{p}(X,\mu)$}

Denote by ${\rm Isom}(L_{p}(X,\mu))$ the group of affine bijective isometries of $L_{p}(X,\mu)$. An affine isometric action of $G$ on $L_{p}(X,\mu)$ is a group homomorphism $\alpha:G\rightarrow {\rm Isom}(L_{p}(X,\mu))$ such that the maps $g\mapsto \alpha(g)f$ are continuous for all $f\in L_{p}(X,\mu)$. Then we have
$$\alpha(g)f=\pi(g)f+b(g)\textrm{ for all }g\in G, f\in L_{p}(X,\mu) $$
where $\pi:G\rightarrow O(L_{p}(X,\mu))$ is an orthogonal representation, and $b:G\rightarrow L_{p}(X,\mu)$ is a 1-cocycle associated to $\pi$, that is a continuous map satisfying the following relations :
$$b(gh)=b(g)+\pi(g)b(h)\textrm{ for all }g,h\in G.$$
Given an orthogonal representation $\pi$ and a cocycle associated to $\pi$, we will sometimes use the notation $\alpha=(\pi,b)$ to denote the affine representation whose linear part is $\pi$, and translation part is $b$. We denote by $\H1(G,\pi)$ the first cohomology group with coefficients in $\pi$, that is the quotient of the space of $1$-cocycles associated to $\pi$, by the subspace of $1$-coboundaries (a coboundary is a cocycle of the form $b(g)=f-\pi(g)f$ for some $f\in L_{p}(X,\mu)$). \\

We recall that an affine isometric action of $G$ on $L_{p}(X,\mu)$ has a fixed-point if and only if the associated cocycle $b:G\rightarrow L_{p}(X,\mu)$ is a bounded map (see Lemma 2.14 in \cite{BFGM} for $p>1$, and \cite{BGM} for $p=1$). The action is said to be proper if $\lim_{g\rightarrow\infty}\n b(g)\n_{p}=\infty$. A topological group $G$ is said to have the fixed-point property $(F_{L_{p}(X,\mu)})$ if every action by affine isometries of $G$ on $L_{p}(X,\mu)$ admits a fixed-point, that is $\H1(G,\pi)=\{0\}$ for all orthogonal representation $\pi:G\rightarrow O(L_{p}(X,\mu))$. \\

A locally compact second countable group $G$ is said to have property $(T_{L_{p}(X,\mu)})$ if for any orthogonal representation $\pi:G\rightarrow O(L_{p}(X,\mu))$, the restriction of $\pi$ on $L_{p}'(\pi)$ (when $p>1$) or $L_{1}/L_{1}^{\pi(G)}$ (when $p=1$), has no sequence of almost invariant vectors. The following facts were proved by Guichardet for the case of Hilbert spaces. Later they were proved in the general setting of $L_{p}$ spaces in \cite{BFGM} (see section 3.a for proofs):
\begin {prp} \label{Guichardet} If an orthogonal representation $\pi:G\rightarrow O(L_{p}(X,\mu))$ of a second countable group $G$ has a sequence of almost invariant vectors in $L_{p}'(\pi)$, then $\H1(G,\pi)\neq\{0\}$.
\end{prp}
An important consequence of the previous proposition is the following corollary, whose result holds in a much more general context (see Theorem 1.3 in \cite{BFGM}).
\begin {cor} Property $(F_{L_{p}(X,\mu)})$ implies property $(T_{L_{p}(X,\mu)})$.
\end{cor}
In the case where the group $G$ is second countable and generated by a compact subset $Q\subset G$, an orthogonal representation $\pi:G\rightarrow O(L_{p}(X,\mu))$ without any sequence of almost invariant vectors in $L_{p}(X,\mu)'$ satisfies the condition: there exists $\epsilon>0$ such that for all $x\in L_{p}(X,\mu)' $ (for $p>1$), or all $x\in L_{1}(X,\mu)/L_{1}^{\pi(G)}(X,\mu)$ (for $p=1$), the following inequality holds:   
$$\epsilon \n x\n_{p}\leq \n x-\pi(g)x\n_{p}  \textrm{ for some }g\in Q.$$
When $p=2$ and the previous condition holds for all orthogonal representations $\pi:G\rightarrow O(L_{2}(X,\mu))$, the pair $(Q,\epsilon)$ is called a Kazhdan pair.

%
%

\section{About the closedness of the fixed-point spectrum}

Our goal is to show that the fixed-point spectrum $\mathcal{F}_{L_{\infty}(X,\mu)}$ is an interval containing 1. To achieve this, we study the topological properties of the set $\mathcal{F}_{L_{\infty}(X,\mu)}$ in $[1,\infty[$. More precisely our goal is to show it is connected. In this section, we show closedness results for $\mathcal{F}_{L_{\infty}(X,\mu)}(G,\pi)$. To prove these results, we use a limit-version of a well-known fact about almost invariant vectors for (BL) representations, which is interesting in its own right (Proposition \ref{prp-aiv} below). \\

We begin with some general facts concerning isometric actions on $L_{p}$-spaces.

\begin{lem}\label{lem-distance}
Let $1\leq p<\infty$, $1\leq p_{n}<\infty$, $p_{n}\neq1$, be such that $\lim_{n}p_{n}=p$. Let $G$ be a topological group. Let $(X,\mu)$ be a standard Borel measure space. Let $\pi^{p}:G\rightarrow O(L_{p}(X,\mu))$ be a (BL) orthogonal representation. Then there exists $C'>0$ and $N$ such that for all $n\geq N$, we have
$$d(M_{p_{n},p}f, L_{p}(X,\mu)^{\pi^{p}(G)})\geq C'\textrm{ for all }f\in S(L_{p_{n}}(X,\mu)'(\pi^{p_{n}})).$$
\end{lem}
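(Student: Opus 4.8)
The plan is to reduce the statement to a \emph{uniform} lower bound, valid for all large $n$, on the distance from the unit sphere $S(L_{p_n}(X,\mu)'(\pi^{p_n}))$ of the complement to the genuine invariant subspace $L_{p_n}(X,\mu)^{\pi^{p_n}(G)}$, and then to transport this bound to $L_p(X,\mu)$ through the Mazur maps. Two ingredients make the transport work. First, the Mazur maps intertwine the family $(\pi^q)$: from the relation (3), for invariant $x$ one has $\pi^p(g)M_{p_n,p}x=M_{p_n,p}\pi^{p_n}(g)x=M_{p_n,p}x$, so $M_{p_n,p}$ carries $\pi^{p_n}(G)$-invariant vectors to $\pi^p(G)$-invariant vectors (and $M_{p,p_n}$ does the reverse), yielding mutually inverse bijections between the two invariant unit spheres. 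Second, by the equicontinuity remark following inequalities (1), both $M_{p,p_n}$ and $M_{p_n,p}$ are uniformly H\"older on the unit spheres: since $\theta_{p,p_n}=\min(1,p/p_n)\to1$, there are $C'>0$ and an exponent $\theta\in(0,1]$ bounded below, both independent of $n$, with $\norm{M_{p,p_n}(x)-M_{p,p_n}(y)}_{p_n}\le C'\norm{x-y}_p^{\theta}$ for $x,y\in S(L_p(X,\mu))$, and symmetrically.

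I would argue by contraposition. Fix $f\in S(L_{p_n}(X,\mu)'(\pi^{p_n}))$ and suppose $\norm{M_{p_n,p}f-h}_p=\delta$ for some $h\in L_p(X,\mu)^{\pi^p(G)}$. As $\norm{M_{p_n,p}f}_p=1$, one has $\abs{\,\norm{h}_p-1\,}\le\delta$, so for $\delta$ small $h\neq0$ and $\norm{M_{p_n,p}f-\tilde h}_p\le2\delta$ with $\tilde h:=h/\norm{h}_p\in S(L_p(X,\mu)^{\pi^p(G)})$. Applying $M_{p,p_n}$, using $M_{p,p_n}\circ M_{p_n,p}=\mathrm{id}$ on the unit sphere together with the first ingredient (so that $M_{p,p_n}(\tilde h)\in S(L_{p_n}(X,\mu)^{\pi^{p_n}(G)})$) gives
$$d\big(f,\,L_{p_n}(X,\mu)^{\pi^{p_n}(G)}\big)\le\norm{f-M_{p,p_n}(\tilde h)}_{p_n}\le C'(2\delta)^{\theta}.$$
Thus a small distance downstream forces $f$ to lie close to the genuine invariants upstream.

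It then remains to bound $d\big(f,L_{p_n}(X,\mu)^{\pi^{p_n}(G)}\big)$ from below by a constant $c_0>0$ independent of $n$. Here I would invoke the canonical $\pi^{p_n}(G)$-invariant decomposition $L_{p_n}=L_{p_n}^{\pi^{p_n}(G)}\oplus L_{p_n}'(\pi^{p_n})$ of Proposition 2.6 in \cite{BFGM}, whose projection $P_n$ onto the invariants is contractive, so that the complementary projection $Q_n:=I-P_n$ satisfies $\norm{Q_n}\le2$. For $f$ in the complement and $h$ invariant one has $Q_n(f-h)=f$, whence $1=\norm{f}_{p_n}\le\norm{Q_n}\,\norm{f-h}_{p_n}$ and $d\big(f,L_{p_n}^{\pi^{p_n}(G)}\big)\ge\norm{Q_n}^{-1}\ge c_0>0$, uniformly in $n$. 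Combining with the previous display, any $\delta$ realized by a complement unit vector satisfies $c_0\le C'(2\delta)^{\theta}$, i.e. $\delta\ge C:=\tfrac12(c_0/C')^{1/\theta}$; for the remaining large-$\delta$ vectors there is nothing to prove. Choosing $N$ so that the equicontinuity and exponent bounds hold for $n\ge N$ yields the conclusion.

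The main obstacle is \emph{uniformity in $n$} as $p_n\to p$: the argument collapses unless both the Mazur constants and the projection norms $\norm{Q_n}$ remain bounded. The Mazur side is handled by the equicontinuity remark and by $\theta_{p,p_n}\to1$; the delicate point is the uniform control of the invariant projections $P_n$, which I would derive from the uniform convexity of the spaces $L_{p_n}(X,\mu)$ making the canonical invariant projection contractive. Some additional care is needed near $p=1$, where the decomposition must be replaced by the quotient $L_1/L_1^{\pi(G)}$ of section 2.1.
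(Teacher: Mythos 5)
Your argument is correct and follows essentially the same route as the paper's proof: a lower bound on $d(f,L_{p_n}(X,\mu)^{\pi^{p_n}(G)})$ for unit vectors $f$ in the complement that is \emph{uniform in $n$}, then transported to $L_p$ through the Mazur maps via the equicontinuity remark following estimates (1). (Your contrapositive transport, with the normalization of $h$ to $\tilde h$, is in fact slightly more careful than the paper's, which applies the sphere estimates to a possibly non-unit invariant vector.) The one substantive difference is how the uniform lower bound is obtained. The paper simply quotes Proposition 3.5 of \cite{O}, which gives $d(f,L_{p_n}^{\pi^{p_n}(G)})\geq 1/2$ with a constant independent of $p_n$; you instead derive it from the assertion that the canonical projection $P_n$ onto $L_{p_n}^{\pi^{p_n}(G)}$ along $L_{p_n}'(\pi^{p_n})$ is contractive. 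That assertion is true, but it is not a one-line consequence of uniform convexity, as your wording suggests; it needs an actual argument. Two standard ways to patch it: (a) apply the Ryll--Nardzewski fixed point theorem to the weakly compact convex $\pi^{p_n}(G)$-invariant set $\overline{\mathrm{conv}}(\pi^{p_n}(G)x)$, whose unique fixed point is forced by the decomposition of Proposition 2.6 in \cite{BFGM} to be $P_n x$, whence $\Vert P_n x\Vert\leq\Vert x\Vert$; or (b) bypass projections entirely with a norming-functional argument: if $h$ is $\pi^{p_n}$-invariant, its support functional is $\pi^{p_n,*}$-invariant by equivariance of the duality map under isometries, hence vanishes on $L_{p_n}'(\pi^{p_n})$, giving $\Vert f-h\Vert_{p_n}\geq\max(\Vert h\Vert_{p_n},1-\Vert h\Vert_{p_n})\geq 1/2$ directly. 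Either patch (or simply citing \cite{O} as the paper does) completes your proof. Finally, your closing worry about $p=1$ is unnecessary: the decomposition and the lower bound are only used at the levels $p_n\neq 1$, while at level $p$ you only use the invariant subspace, so the case $p=1$ requires no separate treatment.
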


\begin{proof}
For $f\in S(L_{p_{n}}'(\pi^{p_{n}}))$, the following inequalities hold (see Proposition 3.5 in \cite{O1}) : 
$$d(f, L_{p_{n}}^{\pi^{p_{n}(G)}})\geq1/2\textrm{ for all }n.$$
Notice that $L_{p}^{\pi^{p}(G)}=M_{p_{n},p}(L_{p_{n}}^{\pi^{p_{n}}(G)})$. Assume by contradiction, that the result does not hold. Then there exist $f_{n}\in S(L_{p_{n}}'(\pi^{p_{n}}))$ and $a_{n}\in L_{p_{n}}^{\pi^{p_{n}}(G)}$ such that
$$\lim_{n} \n M_{p_{n},p}f_{n} - M_{p_{n},p}a_{n} \n_{p}=0.  \esp\esp (3)$$
In particular, since $\n a_n \n_{p_{n}} =\n M_{p_n,p}a_n\n_p$, we have $\lim_{n} \n a_n \n_{p_{n}} =1$. Let $\overline{a_{n}} = \frac{a_{n}}{\n a_{n}\n_{p_{n}}}$. It follows that 
\begin {displaymath}
\begin{split}
\lim_{n} \n M_{p_{n},p}f_{n} - M_{p_{n},p}\overline{a_{n}} \n_{p} \leq  \lim_{n} \esp ( \esp\n M_{p_{n},p}f_{n} & - M_{p_{n},p} a_n\n_{p} \\ & +\n M_{p_{n},p} a_n - M_{p_{n},p} \overline{a_n}\n_{p} \esp). 
\end{split}
\end{displaymath}
As $\n a_{n}\n_{p_{n}} \rightarrow 1$, we conclude from (3) that: 
$$ \lim_{n} \n M_{p_{n},p} f_n -M_{p_{n},p}\overline{a_{n}}\n_{p}=0.$$
Applying Remark \ref{rem universal C} we obtain
$$\lim_{n} \n f_{n} - \overline{a_{n}} \n_{p_{n}} = 0. $$
As $f_n \in S(L_{p_{n}}'(\pi^{p_{n}}))$, and $ \overline{a_n}\in L_{p_{n}}^{\pi^{p_{n}(G)}}$ this contradicts the fact that
$$d(f_{n}, L_{p_{n}}^{\pi^{p_{n}(G)}})\geq1/2\textrm{ for all }n.$$
$\bsq$
\end{proof}

The following proposition is a limit version of Proposition \ref{T and Tlp}.
\begin{prp}\label{prp-aiv}
Let $1\leq p<\infty$, $1\leq p_{n}<\infty$, $p_{n}\neq1$, be such that $\lim_{n}p_{n}=p$. Let $G$ be a second countable topological group. Let $(X,\mu)$ be a standard measure space. Let $\pi^{p}:G\rightarrow O(L_{p}(X,\mu))$ be a (BL) orthogonal representation. Assume that there exists a sequence $(f_{n})_n \in S(L_{p_{n}}(X,\mu)'(\pi^{p_{n}}))$ such that
$$\lim_{n}\sup_{g\in Q}\n f_{n}-\pi^{p_{n}}(g)f_{n}\n_{p_{n}}=0$$
for all compact subsets $Q\subset G$. Then there exists a sequence of almost invariant vectors for $\pi^{p}$ in $L_{p}(X,\mu)'(\pi^{p})$.
\end{prp}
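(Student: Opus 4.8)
The plan is to transport the almost-invariant vectors $(f_n)_n$ from the spaces $L_{p_n}(X,\mu)$ to the target space $L_p(X,\mu)$ using the Mazur maps $M_{p_n,p}$, and to show that their images form (after normalization and removal of their invariant component) a sequence of almost-invariant vectors for $\pi^p$ in $L_p'(\pi^p)$. The key structural fact is the intertwining relation built into the definition of $\pi^q$ by formula $(3)$: for all $g\in G$ and all $x$ we have $M_{p_n,p}\circ\pi^{p_n}(g)=\pi^p(g)\circ M_{p_n,p}$, since $\pi^{p_n}(g)=M_{p,p_n}\circ\pi^p(g)\circ M_{p_n,p}$ and $M_{p,p_n}$, $M_{p_n,p}$ are mutually inverse on the relevant spheres. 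This is what will let the Mazur map convert the approximate invariance of $f_n$ under $\pi^{p_n}(g)$ into approximate invariance of $M_{p_n,p}f_n$ under $\pi^p(g)$.

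The steps, in order, are as follows. First I would set $h_n:=M_{p_n,p}f_n\in S(L_p(X,\mu))$ (the Mazur map preserves unit spheres). Using the intertwining relation and the uniform Mazur estimate $(1)$ — whose constant $C$ can be taken independent of $n$ since $p_n\to p$, as stressed in the remark after $(1)$ — I would estimate
$$\norm{h_n-\pi^p(g)h_n}_p=\norm{M_{p_n,p}f_n-M_{p_n,p}(\pi^{p_n}(g)f_n)}_p\leq C\,\norm{f_n-\pi^{p_n}(g)f_n}_{p_n}^{\theta},$$
where $\theta=\theta_{p_n,p}$ (or its reciprocal-exponent analogue when $p<p_n$) stays bounded away from $0$. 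Taking the supremum over a compact $Q$ and using the hypothesis $\sup_{g\in Q}\norm{f_n-\pi^{p_n}(g)f_n}_{p_n}\to 0$ gives $\sup_{g\in Q}\norm{h_n-\pi^p(g)h_n}_p\to 0$. Second, since these $h_n$ need not lie in $L_p'(\pi^p)$, I would split $h_n=a_n+h_n'$ along the $\pi^p(G)$-invariant decomposition $L_p=L_p^{\pi^p(G)}\oplus L_p'(\pi^p)$, where $a_n$ is the invariant part. Lemma \ref{lem-distance} guarantees that $d(h_n,L_p^{\pi^p(G)})\geq C'>0$ for large $n$, hence $\norm{h_n'}_p\geq C'$; after normalizing, $f_n':=h_n'/\norm{h_n'}_p$ is a unit vector in $L_p'(\pi^p)$. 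Because both the projection onto the invariant subspace and its complement are $\pi^p(G)$-equivariant, $\norm{f_n'-\pi^p(g)f_n'}_p\leq \norm{h_n'}_p^{-1}\,\norm{h_n-\pi^p(g)h_n}_p\leq (C')^{-1}\sup_{g\in Q}\norm{h_n-\pi^p(g)h_n}_p\to 0$ uniformly on $Q$. This exhibits $(f_n')_n$ as almost-invariant vectors for $\pi^p$ inside $L_p'(\pi^p)$.

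The main obstacle I anticipate is the second step rather than the intertwining computation: one must be sure that the images $h_n=M_{p_n,p}f_n$ do not collapse onto the invariant subspace $L_p^{\pi^p(G)}$ in the limit, for otherwise their invariant-free components could vanish and the normalization would be illegitimate. This is precisely the role of Lemma \ref{lem-distance}, which supplies a uniform lower bound $d(M_{p_n,p}f,L_p^{\pi^p(G)})\geq C$ for every $f\in S(L_{p_n}'(\pi^{p_n}))$; its proof in turn relies on the uniformity of the Mazur constant as $p_n\to p$. A secondary technical point to handle carefully is the direction of inequality $(1)$ when $p<p_n$ versus $p>p_n$ (along the sequence $p_n$ may approach $p$ from either side), but in both regimes the exponent $\theta$ is bounded below, so the conclusion $\sup_{g\in Q}\norm{h_n-\pi^p(g)h_n}_p\to 0$ is unaffected.
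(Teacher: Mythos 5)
Your proposal follows essentially the same route as the paper's own proof: transport the vectors via $h_n=M_{p_n,p}f_n$ using the $n$-independent H\"older constant for the Mazur maps, invoke Lemma \ref{lem-distance} to get the uniform lower bound $d(h_n,L_p^{\pi^p(G)})\geq C'$, then project onto the complement and normalize. For $p>1$ your argument is correct (with one harmless imprecision: the equivariant projection onto $L_p'(\pi^p)$ is bounded but not necessarily contractive in $L_p$, $p\neq 2$, so a fixed factor $\norm{P'}$ should appear in your final estimate; since it is a constant of the target space, independent of $n$, the conclusion is unaffected --- the paper glosses over the same point).

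There is, however, one genuine omission: the proposition allows $p=1$ (only the $p_n$ are assumed $\neq 1$, and the $p_n$ may well decrease to $1$), and there your second step fails as written, because the decomposition $L_p=L_p^{\pi^p(G)}\oplus L_p'(\pi^p)$ along which you split $h_n$ is only available for $p>1$ (see Section 2.1 of the paper). The paper treats $p=1$ separately: one replaces the projection onto a complemented subspace by the quotient map onto $F=L_1/L_1^{\pi^1(G)}$, which is linear, contractive and $G$-equivariant. Writing $v_n$ for the image of $h_n$ in $F$, the quotient norm satisfies $\norm{v_n}_F=d(h_n,L_1^{\pi^1(G)})\geq C'$ by Lemma \ref{lem-distance}, and then $v_n/\norm{v_n}_F$ is a sequence of almost invariant vectors for the quotient representation $(\pi^1)'$, which is exactly what the conclusion means in the paper's conventions when $p=1$. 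Adding this case makes your argument complete.
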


\begin{proof}
Let $Q\subset G$ be a compact set. Define $h_{n}=M_{p_{n},p}f_{n}\in S(L_{p}(X,\mu))$ for $n\in\mathbb{N}$. As in the previous lemma, we can choose $C$ for the estimates of the Mazur maps $M_{p_{n},p}$, independent of $n$. Then we have
$$\lim_{n}\sup_{g\in Q}\n h_{n}-\pi^{p}(g)h_{n}\n_{p}=0.\esp\esp(\ast)$$
Now by Lemma \ref{lem-distance}, there exists $C'>0$ such that $d(h_{n}, L_{p}^{\pi^{p}(G)})\geq C'$ holds for all $n$ large enough.\\
Consider first the case where $p>1$. Let $v_{n}$ denote the projections of $h_{n}$ on the complement subspace $L_{p}'(\pi^{p})$. Since $\n v_{n}\n_{p}\geq C'$ for $n$ large enough, the uniform convergence $(\ast)$ on compact subsets holds when replacing $h_{n}$ by $v_{n}/\n v_{n}\n_{p}$. Hence $(v_{n}/\n v_{n}\n_{p})_{n}$ is a sequence of almost invariant vectors for $\pi^{p}$ with values in $L_{p}'(\pi^{p})$. \\
For $p=1$, we consider $v_{n}$ the projection of $h_{n}$ on the quotient space $F=L_{1}/L_{1}^{\pi^{p}(G)}$. Then $\n v_{n}\n_{F}\geq C'$, and the sequence $v_{n}/\n v_{n}\n_{F}$ is a sequence of almost invariant vectors for the representation $(\pi^{1})':G\rightarrow O(F)$.
$\bsq$
\end{proof}

\medskip

Now we show a closedness property for some sets $\mathcal{F}_{L_{\infty}(X,\mu)}(G,\pi)$. Our proof  requires two important assumptions : the monoticity of $(L_{p}(X,\mu))_{p}$ for the inclusion, and the representation $\pi$ to be measure-preserving. 

\begin{prp}\label{prp-closedness}
Let $1\leq p<\infty$, $1\leq p_{n}<\infty$, $p_{n}\neq1$, be such that $\lim_{n}p_{n}=p$. Assume that $L_{p}(X,\mu)\subset L_{p_{n}}(X,\mu)$ for all $n$, and that $\lim_{n}\n f\n_{p_{n}}=\n f\n_{p}$ for all $f\in L_{p}(X,\mu)$. Let $G$ be a second countable topological group. Let $\pi^{p}:G\rightarrow O(L_{p}(X,\mu))$ be a (BL) measure-preserving orthogonal representation. Assume also that
$$\H1(G,\pi^{p_{n}})=\{0\}.$$
Then we have 
$$\H1(G,\pi^{p})=\{0\}.$$
\end{prp}

\begin{proof}
Let $b:G\rightarrow L_{p}(X,\mu)$ be a cocycle associated to $\pi^{p}$. Let $n\in\mathbb{N}$. Since by assumption $L_{p}(X,\mu)\subset L_{p_{n}}(X,\mu)$ and $\pi^{p}$ is measure-preserving, $b$ defines also a cocycle for the representation $\pi^{p_{n}}$. Then there exists $f_{n}\in L_{p_{n}}(X,\mu)$ such that 
$$b(g)=f_{n}-\pi^{p_{n}}(g)f_{n}\textrm{ for all }g\in G.$$
Notice that we can assume that $f_{n}\in L_{p_{n}}'(\pi^{p_{n}})$ for all $n$, without loss of generality. 

\begin{clm}\label{claim-fn-bounded} The sequence $(\n f_{n}\n_{p_{n}})_{n}$ is bounded. \end{clm}

Let us prove this claim now. We assume the contrary and will show that $(\pi^{p}$)' has a sequence of almost invariant vectors, which is not possible by assumption. Assume then that there exist a subsequence of $(f_n)_n$, (and use the same notation for the subsequence), with  $\lim_n{(\n f_{n}\n_{p_{n}})_{n}} = \infty$  Let $Q\subset G$ be a compact subset. Since $\lim_{n}\n b(g)\n_{p_{n}}=\n b(g)\n_{p}$ for all $g\in G$, for $n$ large enough we have
\begin{displaymath}
\begin{split}
\sup_{q\in Q}\n \frac{f_{n}}{\n f_{n} \n_{p_{n}}} -\pi^{p_{n}}(g)   \frac{f_{n}}{\n f_{n} \n_{p_{n}}}\n_{p_{n}} & =   \frac{\sup_{g\in Q} \n b(g) \n_{p_{n}} }{ \n f_{n} \n_{p_{n}}  }  \\
&\leq 2  \frac{\sup_{g\in Q} \n b(g) \n_{p} }{ \n f_{n} \n_{p_{n}}  }
\end{split}
\end{displaymath}
and the right-hand side of the inequality tends to $0$ as $n$ tends to $\infty$. Thus we can apply Proposition \ref{prp-aiv} on the sequence $\frac{f_{n}}{\n f_n \n_{p_n}}$  and deduce that there exists a sequence of almost invariant vectors for $(\pi^{p})'$. Hence by Lemma \ref{T and Tlp} the representations $(\pi^{p_{n}})'$ admits a sequence of almost invariant vectors for all $n$ as well. This contradicts the vanishing of $\H1(G,\pi^{p_{n}})$ (by the $L_p$ version of Guichardet's Theorem, stated in Proposition \ref{Guichardet}), and Claim \ref{claim-fn-bounded} is proved. \\

Then there exists $C''>0$ such that $\n f_{n}\n_{p_{n}}\leq C''$ for all $n\in\mathbb{N}$. Let $g\in G$. We have
$$\n b(g)\n_{p}=\lim_{n}\n b(g) \n_{p_{n}}\leq 2C''.$$
Hence the cocycle $b:G\rightarrow L_{p}(X,\mu)$ is bounded in $L_{p}(X,\mu)$. $\bsq$
\end{proof}

\medskip

%
%

\section{The fixed-point spectrum for actions on $\lp$}

This section handles the case of actions of $G$ on $\lp$-spaces over a discrete space $X$. It is divided in two parts. In the first part, we prove Theorem \ref{thm-main-lp}. Some elements in the proofs of our Proposition \ref{prp-lp} and Lemma \ref{lem-lp} can be found in section 3.1 of \cite{C}, and in particular the crucial use of estimates (2) in Section 2.2. However, the approach of the current paper requires different statements from those in \cite{C}, and we provide relevant results with complete proofs for convenience of the reader. In the second part of the section, we discuss property $(F_{\lp})$ and exhibit examples of groups $G$ illustrating each form of possible fixed-point spectra $\mathcal{F}_{\ell_{\infty}}(G)$ listed in the statement of Theorem \ref{thm-main-lp}.

\subsection{Proof of Theorem \ref{thm-main-lp}}

Theorem \ref{thm-main-lp} is a direct consequence of Proposition \ref{prp-lp}, which we state and prove after a few useful remarks.\\

Let $\pi:G\rightarrow O(\lp)$ be a $(BL)$ orthogonal representation. Write $\lp=\lp(X)$ and decompose $X=X^{f} \cup{X^{i}}$ where $X^{f}$ is the union of the finite orbits of the $G$-action, and $X^{i}$ the union of the infinite orbits. In the sequel, we will make use of the decomposition of $\pi$ as $\pi=\pi^{f}\oplus\pi^{i}$ with respect to the decomposition $\lp(X)=\lp(X^{f})\oplus\lp(X^{i})$.\\

We make the following observation :
$$H^{1}(G,\pi)=\{0\}\esp  \Leftrightarrow  \esp H^{1}(G,\pi^{f})=\{0\} \textrm{ and }H^{1}(G,\pi^{i})=\{0\}.$$
Hence $\mathcal{F}_{\ell_{\infty}}(G,\pi)$ is an interval of the form $[1,p_{c}]$ or $[1,p_{c}[$ for some $p_{c}\geq1$ if the two following conditions hold:\\
(i) $\mathcal{F}_{\ell_{\infty}}(G,\pi^{f})$ is an interval $[1,p_{c}']$ or $[1,p_{c}'[$ for some $p_{c}'\geq1$;\\
(ii) $\mathcal{F}_{\ell_{\infty}}(G,\pi^{i})$ is an interval $[1,p_{c}'']$ or $[1,p_{c}''[$ for some $p_{c}''\geq1$.\\

For technical reasons, we are not able to prove that $\mathcal{F}_{\ell_{\infty}}(G,\pi)$ is an interval for all representations $\pi$ (but for positive representations, we do). Nevertheless, the following Proposition \ref{prp-lp} shows a very close result to the connectedness of all sets $\mathcal{F}_{\ell_{\infty}}(G,\pi)$, and Theorem \ref{thm-main-lp} is a straightforward consequence of this result. Notice that when a group $G$ is not compactly generated, then $\mathcal{F}_{\ell_{\infty}}(G)$ is empty: for any $p\geq1$, such a group does not have property $(T_{\lp})$ (see \cite{BO}), nor property $(F_{\lp})$ by well-known results.
\begin{prp}\label{prp-lp}
Let $G$ be a second countable group generated by a compact set $Q$. Let $\pi:G\rightarrow O(\lp)$ be a (BL) orthogonal representation. Let $\pi^{f},\pi^{i}$ be defined as in the previous discussion. Then we have:\\
(i) $\mathcal{F}_{\ell_{\infty}}(G,\pi^{f})=[1,p_{c}']$ or $[1,p_{c}'[$;\\
(ii) $\mathcal{F}_{\ell_{\infty}}(G,\pi^{i})\cap\mathcal{F}_{\ell_{\infty}}(G,\vert\pi^{i}\vert)=[1,p_{c}'']$ or $[1,p_{c}''[$.
\end{prp}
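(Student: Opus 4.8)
The plan is to establish both statements by reducing each to a monotonicity property of the critical exponent, using the closeness result of Proposition~\ref{prp-closeness} as the key technical input. The core claim to isolate is the following implication: if $\H^{1}(G,\pi^{p})=\{0\}$ for some $p\geq 1$, then $\H^{1}(G,\pi^{q})=\{0\}$ for all $1\leq q<p$. Once this ``downward closure'' is known, the fixed-point spectrum is automatically an initial segment of $[1,\infty[$, and combining it with the topological closeness from Proposition~\ref{prp-closeness} forces the segment to be of the form $[1,p_{c}]$ or $[1,p_{c}[$. So the whole argument hinges on proving this monotonicity for the two representations $\pi'$ and (the combined condition involving) $\pi''$.

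First I would treat the finite-orbit part $\pi'$, which is the easier case. Since $X'$ is a union of finite $G$-orbits and $\pi$ is (BL) hence measure-preserving on $\ell_p$, the space $\ell_p(X')$ decomposes as an $\ell_p$-sum of finite-dimensional blocks, one per orbit, on each of which $G$ acts by permutation-type isometries. The plan is to exhibit, for $q<p$, a direct downward-monotonicity of cohomology vanishing: a cocycle for $\pi^{q}$ can be transported to a cocycle for $\pi^{p}$ via the inclusion $\ell_p(X')\subset\ell_q(X')$ (which holds on a measure space where each orbit has bounded/comparable weights), and boundedness is preserved because on each finite block all $p$-norms are equivalent. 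More structurally, since $G=\langle Q\rangle$ is compactly generated and $X'$ consists of finite orbits, invariant vectors and the splitting $\ell_p(X')=\ell_p^{\pi(G)}\oplus\ell_p'(\pi)$ behave uniformly in $p$, so I would show $\mathcal F_{\ell^{\infty}}(G,\pi')$ is downward closed and then invoke Proposition~\ref{prp-closeness} (whose hypotheses $\ell_p\subset\ell_{p_n}$ and norm convergence are satisfied for $p_n\downarrow p$ on a discrete measure space) to get closeness from the right, yielding the interval form in (i).

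For the infinite-orbit part (ii), the subtlety is that one cannot control $\pi''$ alone, which is precisely why the statement intersects $\mathcal F_{\ell^{\infty}}(G,\pi'')$ with $\mathcal F_{\ell^{\infty}}(G,|\pi''|)$. The plan is to use the positive representation $|\pi''|$ as a comparison object: positivity lets one apply the sharper Mazur map estimate $(2)$ from section~2.2, which holds for all nonnegative $x,y\in L_0(X,\mu)$ without restriction to the unit sphere, thereby giving cocycle-transport estimates across the full range of exponents rather than just near a limit point. Concretely, I would show that if both $\H^{1}(G,(\pi'')^{p})=\{0\}$ and $\H^{1}(G,(|\pi''|)^{p})=\{0\}$, then the analogous vanishing holds for all $q<p$, by lifting a $q$-cocycle, comparing its growth to that of its modulus via $(2)$, and using that $|\pi''|$ being measure-preserving on infinite orbits has no almost-invariant vectors precisely when its cohomology vanishes. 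Then the intersection $\mathcal F_{\ell^{\infty}}(G,\pi'')\cap\mathcal F_{\ell^{\infty}}(G,|\pi''|)$ is downward closed, and applying Proposition~\ref{prp-closeness} to each factor gives the interval form in (ii).

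The main obstacle I anticipate is the downward-monotonicity step for the non-positive representation $\pi''$: the general Mazur map estimate $(1)$ degrades as $\theta_{p,q}=\min(1,p/q)<1$ and only holds on the unit sphere, so a direct transport of an unbounded-looking cocycle from exponent $q$ to exponent $p$ can fail to preserve the boundedness needed to conclude a fixed point. This is exactly the ``technical reason'' flagged before the proposition for why arbitrary $\pi$ cannot be handled. Overcoming it forces the comparison with $|\pi''|$, where the unrestricted positive estimate $(2)$ supplies the missing uniform control; the delicate point will be verifying that the modulus cocycle inherits boundedness and that almost-invariant-vector obstructions for $\pi''$ and $|\pi''|$ can be played off against each other, which is where I expect most of the real work to lie.
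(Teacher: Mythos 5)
Your high-level skeleton coincides with the paper's: prove downward closure of the relevant spectra (vanishing at a larger exponent implies vanishing at all smaller ones), treat the finite-orbit and infinite-orbit parts separately, and use positivity plus the Mazur estimate (2) on the infinite-orbit part. But in both parts the step that carries the actual content is either missing or rests on a claim that fails. In (i), your mechanism is that boundedness transfers ``because on each finite block all $p$-norms are equivalent''. The equivalence constant on an orbit of size $N$ is $N^{1/q-1/p}$, and the orbit sizes in $X'$ are unbounded in general, so a witness $x\in\ell_p$ for the transported cocycle gives no control at all in $\ell_q$ ($q<p$): a mean-zero vector spread over a large orbit has small $\ell_p$-norm and huge $\ell_q$-norm. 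The missing idea is a uniform spectral gap: since $H^{1}(G,(\pi^{p})')=\{0\}$, Guichardet's argument shows $(\pi^{p})'$ has no almost invariant vectors, hence by the transfer of almost invariant vectors across exponents (end of section 2.2) neither does $(\pi^{q})'$, giving $\epsilon>0$ with $\epsilon\Vert f\Vert_q\leq\sup_{g\in Q}\Vert f-\pi(g)f\Vert_q$ for $f\in\ell_q'$. The paper applies this gap to the truncations $u_n=\oplus_{i\leq n}x_i$ of the witness over finitely many orbits (these are finitely supported, hence lie in $\ell_q$), using that $u_n-\pi(g)u_n$ is a coordinate restriction of $b(g)$, to get $\Vert u_n\Vert_q\leq\sup_{g\in Q}\Vert b(g)\Vert_q/\epsilon$ uniformly, whence $b$ is bounded. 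Nothing in your proposal plays this role, and Proposition \ref{prp-closeness} cannot fill it: right-closedness is automatic once downward closure is known, and its genuine use in the paper is only to reach the endpoint $q=1$, where the splitting $\ell_1=\ell_1^{\pi(G)}\oplus\ell_1'$ is unavailable. (Also, the inclusion you transport along points the wrong way: for $q<p$ one has $\ell_q\subset\ell_p$, not $\ell_p\subset\ell_q$.)

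In (ii) the tools you name are the right ones, but the statement you hang them on --- that $|\pi''|$ ``has no almost-invariant vectors precisely when its cohomology vanishes'' --- is false: Guichardet gives only one implication, and the paper's own example $SL_{2}(\mathbb{Q}_{l})$ has property $(T_{\ell_{p}})$ yet fails property $(F_{\ell_{p}})$. What is actually needed concerns invariant vectors, not almost invariant ones: since all orbits in $X''$ are infinite, $\ell_q(X'')$ has no nonzero invariant vectors, so a coboundary determines its witness uniquely. The paper's argument runs: transport an $\ell_p$-cocycle $b$ for $\vert\pi''\vert$ up to $\ell_q$ and solve it there with witness $x$; by the pointwise inequality $\vert\vert a\vert-\vert b\vert\vert\leq\vert a-b\vert$ and estimate (2), the map $g\mapsto M_{p,q}(\vert x\vert)-\vert\pi''\vert(g)M_{p,q}(\vert x\vert)$ is an $\ell_q$-valued cocycle for $\vert\pi''\vert$; solving it with witness $y$ and invoking uniqueness forces $M_{p,q}(\vert x\vert)=y$, hence $\vert x\vert=M_{q,p}(y)\in\ell_p$ and $b$ is a coboundary at exponent $p$. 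The same uniqueness step is the content of Lemma \ref{lem-lp}, which converts the pair of hypotheses on $\pi''$ and $\vert\pi''\vert$ into vanishing for $\pi''$ itself --- exactly the step you defer as ``playing off obstructions against each other''. As written, neither implication in your plan closes, even though both can be closed with the ingredients already available in sections 2.2 and 4.1.
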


Before giving the proof of Proposition \ref{prp-lp}, we start with a simple lemma.

\begin{lem}\label{lem-lp}
Let $1\leq p<q$. Let $\pi^{p}:G\rightarrow O(\lp)$ be a (BL) orthogonal representation. Assume that $X^{f}=\emptyset$, and that
$$H^{1}(G,\pi^{q})=\{0\} \textrm{ and } H^{1}(G,\vert\pi^{p}\vert)=\{0\}.$$
Then we also have
$$ H^{1}(G,\pi^{p})=\{0\}. $$
\end{lem}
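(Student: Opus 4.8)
The plan is to show that the cocycle $b$ defining an arbitrary affine action $(\pi^p, b)$ is bounded, using the two hypotheses to control $b$ both ``in absolute value'' and ``in phase.'' The starting point is the Mazur map $M_{p,q}$, which conjugates $\pi^p$ to the representation $\pi^q$ on $\ell_q$. Since $H^1(G,\pi^q)=\{0\}$, every cocycle for $\pi^q$ is a coboundary, hence bounded; the natural idea is to push the given cocycle $b$ for $\pi^p$ through $M_{p,q}$ and read off boundedness information. The difficulty is that $M_{p,q}$ is only defined on the unit sphere and is merely uniformly continuous with a H\"older estimate \emph{between spheres}, so one cannot simply apply it to $b(g)$ and get a cocycle for $\pi^q$. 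The realistic route is instead to argue by contradiction: if $b$ is unbounded, produce a sequence of almost invariant vectors for $(\pi^p)'$ (equivalently for some $\pi^q$), which would contradict one of the vanishing hypotheses via the Guichardet-type argument recalled in section 2.2.

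More concretely, I would proceed as follows. First, decompose the problem using the absolute-value representation $|\pi^p|$. The hypothesis $H^1(G,|\pi^p|)=\{0\}$ controls the modulus $|b(g)|$: writing out formula $(\ast)$ from the Banach--Lamperti theorem, $\pi^p(g)f = h_g \cdot (f\circ\varphi_g)$ with $|h_g|=1$, so $|\pi^p(g)f| = |f|\circ\varphi_g = |\pi^p|(g)|f|$. Thus $|b|$ behaves like a cocycle for $|\pi^p|$ (up to the triangle-inequality slack $\big||b(gh)|\big| \le \dots$), and vanishing of $H^1(G,|\pi^p|)$ should force the moduli $|b(g)|$, or an associated genuine $|\pi^p|$-cocycle built from them, to be bounded in $\ell_p$-norm. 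Next, the hypothesis $H^1(G,\pi^q)=\{0\}$ together with the assumption that $\pi^p$ has no non-zero invariant vectors must be used to control the phases. Since $\pi^p$ has no invariant vectors, the same holds for $\pi^q$ (almost-invariant-vector equivalence across $L_p$-spaces, section 2.2), so $\pi^q$ has no invariant vectors either and $H^1(G,\pi^q)=\{0\}$ gives a genuine quantitative coercivity estimate of the form $\varepsilon\|x\|_q \le \|x - \pi^q(g)x\|_q$ for some $g\in Q$.

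The heart of the argument is then to combine these two controls. Supposing $\|b(g)\|_p$ were unbounded along some sequence $g_n$, I would normalize and consider $f_n = b(g_n)/\|b(g_n)\|_p$ on the sphere of $\ell_p$. The cocycle relation $b(g h) = b(g) + \pi^p(g) b(h)$ for $h\in Q$ shows that $f_n - \pi^p(h) f_n = -\,b(h)/\|b(g_n)\|_p + (\text{correction})$, whose norm tends to $0$ uniformly for $h$ in the compact generating set $Q$ (this is the standard ``unbounded cocycle yields almost invariant vectors'' computation, as in the proof of Proposition \ref{prp-closeness}). After projecting onto $\ell_p'(\pi^p)$ — where the no-invariant-vectors hypothesis guarantees the projections stay bounded away from $0$ — this produces almost invariant vectors for $(\pi^p)'$. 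By the equivalence recalled at the end of section 2.2, $(\pi^q)'$ then also has almost invariant vectors, contradicting the coercivity estimate coming from $H^1(G,\pi^q)=\{0\}$. The role of $H^1(G,|\pi^p|)=\{0\}$ is to handle the case $p<q$, where the Mazur-map H\"older exponent $\theta_{p,q}=p/q<1$ degrades norms in the wrong direction, so that controlling the modulus separately is what makes the phase argument go through on $\ell_q$.

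I expect the main obstacle to be precisely this interplay between modulus and phase under the Mazur map. The inequality $(2)$ for non-negative functions, $\|M_{p,q}(x)-M_{p,q}(y)\|_q \le \|x-y\|_p^{p/q}$, is the tool that lets one transfer the boundedness of $|b|$ (obtained from $|\pi^p|$) into usable control on $\ell_q$, but marrying it to the coercivity on $\ell_q$ while keeping track of the unimodular phase functions $h_g$ requires care: one must ensure that the almost-invariance estimate survives the passage through $M_{p,q}$, which is only H\"older, not Lipschitz. I would isolate this as the technical crux and verify that the independent bound on $\|\,|b(g)|\,\|_p$ is exactly what compensates for the loss in the H\"older exponent, so that the contradiction with $H^1(G,\pi^q)=\{0\}$ is genuine rather than an artifact of the distortion.
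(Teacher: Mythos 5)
There is a genuine gap at the core of your contradiction argument, namely the step ``unbounded cocycle yields almost invariant vectors.'' If $\Vert b(g_{n})\Vert_{p}\rightarrow\infty$ and $f_{n}=b(g_{n})/\Vert b(g_{n})\Vert_{p}$, then for $h$ in the compact generating set the cocycle relation gives
$$f_{n}-\pi^{p}(h)f_{n}=\frac{b(g_{n})-b(hg_{n})+b(h)}{\Vert b(g_{n})\Vert_{p}},$$
and the term $b(g_{n})-b(hg_{n})$ is not controlled: nothing forces it to be $o(\Vert b(g_{n})\Vert_{p})$. The computation you cite from the proof of Proposition \ref{prp-closeness} is of a different nature: there the normalized vectors are the \emph{potentials} $f_{n}$ of coboundaries, for which $f_{n}-\pi^{p_{n}}(g)f_{n}=b(g)$ holds exactly, so the numerator stays bounded while the denominator blows up; for cocycle values no such identity exists. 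Indeed the implication you need is false in general: the regular representation $\lambda$ of the free group $F_{2}$ on $\ell_{p}(F_{2})$ is a (BL) representation with $H^{1}(F_{2},\lambda)\neq\{0\}$, hence with unbounded cocycles, yet $\lambda$ has no almost invariant vectors since $F_{2}$ is not amenable. So the contradiction with the coercivity estimate supplied by $H^{1}(G,\pi^{q})=\{0\}$ can never be reached along your route. A secondary confusion: since $\Vert\,\vert b(g)\vert\,\Vert_{p}=\Vert b(g)\Vert_{p}$, ``bounding the modulus of $b$'' is literally the same as bounding $b$ itself, and $\vert b\vert$ is not a cocycle for $\vert\pi^{p}\vert$; the modulus/phase splitting, as you set it up, does not actually decompose the problem.

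The paper's proof is direct and rests on a structural feature of sequence spaces that your proposal never uses: for $p<q$ one has $\ell_{p}\subset\ell_{q}$ with $\Vert\cdot\Vert_{q}\leq\Vert\cdot\Vert_{p}$, and since (BL) representations on $\ell_{p}$ are measure-preserving, the given $\ell_{p}$-valued cocycle $b$ is automatically a cocycle for $\pi^{q}$. Then $H^{1}(G,\pi^{q})=\{0\}$ yields $x\in\ell_{q}$ with $b(g)=x-\pi^{q}(g)x$. The key point is the pointwise inequality $\vert\,\vert s\vert-\vert t\vert\,\vert\leq\vert s-t\vert$ for complex numbers: it shows that $\vert x\vert-\vert\pi^{p}\vert(g)\vert x\vert$ is dominated pointwise by $\vert b(g)\vert$, hence lies in $\ell_{p}$, so $g\mapsto\vert x\vert-\vert\pi^{p}\vert(g)\vert x\vert$ is an $\ell_{p}$-valued cocycle for $\vert\pi^{p}\vert$. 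Now $H^{1}(G,\vert\pi^{p}\vert)=\{0\}$ gives $y\in\ell_{p}$ with $\vert x\vert-\vert\pi^{p}\vert(g)\vert x\vert=y-\vert\pi^{p}\vert(g)y$ for all $g$, so $\vert x\vert-y$ is an invariant vector; the absence of non-zero invariant vectors forces $\vert x\vert=y\in\ell_{p}$, hence $x\in\ell_{p}$, and $b$ is a coboundary for $\pi^{p}$. In other words, the role of $H^{1}(G,\vert\pi^{p}\vert)=\{0\}$ is not to bound the cocycle but to upgrade the integrability of the potential $x$ from $\ell_{q}$ to $\ell_{p}$; no Mazur maps, H\"older estimates, ultraproducts, or almost-invariant-vector arguments are needed.
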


\begin{proof}
Let $b:G\rightarrow\lp$ be a cocycle for $\pi$. As $\lp \subset \ell_{q}$, $b$ is also a cocycle for $\pi^{q}$. By assumption, there exists $x\in\ell_{q}$ such that
$$b(g)=x-\pi(g)x \textrm{ for all }g\in G.$$
For all $a,b\in\mathbb{C}$, we have  
$$\vert \vert a\vert-\vert b\vert \vert\leq \vert a-b\vert,$$
and the following inequalities follow for all $g\in G$ :
$$ \n \vert x\vert - \vert\pi^{p}\vert(g)\vert x\vert\n_{p} \leq \n b(g)\n_{p}. $$
Hence the formula $c(g)= \vert x\vert - \vert\pi^{p}\vert(g)\vert x\vert$ defines a cocycle for $\vert\pi^{p}\vert$ with values in $\lp$. By assumption, there exists $y\in\lp$ such that
$$ \vert x\vert - \vert\pi^{p}\vert(g)\vert x\vert = y- \vert\pi^{p}\vert(g)y \textrm{ for all }g\in G.$$
It follows then, that $\vert x\vert -y $ is a $ \vert \pi^{p} \vert$ invariant vector.  Since $\lp \subset \ell_{q}$, and $\vert\pi^{p}\vert$ coincides with $\vert\pi^{q}\vert$ on $\lp$, this is a $ \vert \pi^{q} \vert$ invariant vector as well. From the assumption $X^{f}=\emptyset$, we deduce that $\vert \pi^{q}\vert$ does not have non-zero invariant vectors. Hence $\vert x\vert -y =0$, i.e. $\vert x\vert=y$, and then $x\in\lp$. $\bsq$ 
\end{proof}
\begin {rem}

Note that in $\lp(X^{i})$ there are no non-zero invariant vectors for both $\pi$, and $\vert \pi\vert$.
\end {rem}

\medskip
Now we are able to prove Proposition \ref{prp-lp}.

\begin{proof-prp12} (i) We will show that for all $1<p<q$, $H^{1}(G,(\pi^{f})^{q})=\{0\}$ implies $H^{1}(G,(\pi^{f})^{p})=\{0\}$. If this assertion holds, the set $\mathcal{F}_{\ell_{\infty}}(G,\pi^{f})$ is an interval of one the following forms: $[1,p_{c}']$, $[1,p_{c}'[$, $]1,p_{c}']$ or $]1,p_{c}']$ for some $p_{c}'\geq1$. By the closedness property of Proposition \ref{prp-closedness}, $\mathcal{F}_{\ell_{\infty}}(G,\pi^{f})$ is an interval of the form $[1,p_{c}']$ or $[1,p_{c}'[$.\\

Let $b:G\rightarrow\lp(X^{f})$ be a cocycle associated to $\pi^{f}$. By assumption, there exists $x\in\ell_{q}$ such that
$$b(g)=x-\pi^{f}(g)x\textrm{ for all }g\in G.$$
Notice that we can assume that $x\in \ell_{q}(X^{f})'$ without loss of generality. We decompose $X^{f}=\sqcup_{j\geq 0} X_{j}$ in (finite) orbits, and write $x=\oplus_{j} x_{j}$ in $\ell_{q}(X^{f})=\oplus_{j}\ell_{q}(X_{j})$. From the definition of the complement $\ell_{q}'$ (see section 2.1), it is clear that $x_{j}\in\ell_{q}(X_{j})'$ for all $j$.\\
Since $H^{1}(G,(\pi^{f})^{q})=\{0\}$, $\pi^{f}$ has no sequence of almost invariant vectors in $\ell_{q}(X^{f})'$. As recalled in Lemma \ref {T and Tlp}, it is then also true that $\pi^{f}$ has no sequence of almost invariant vectors in $\ell_{p}(X^{f})'$. In particular,
there exists $\epsilon>0$ such that 
$$\epsilon\n u\n_{p}\leq \sup_{g\in Q} \n u-\pi^{f}(g)u\n_{p} \textrm{ for all }u\in \lp'(\pi^{f}).$$
Define $u_{n}=\oplus_{j\leq n}x_{j}\in\ell_{q}'$, and $M=\sup_{g\in Q}\n b(g)\n_{p}$. Then we have for all $n$,
\begin{displaymath}
\begin{split}
\epsilon \n u_{n}\n_{p}& \leq\sup_{g\in Q} \n u_{n}-\pi^{f}(g)u_{n}\n_{p}\\
& \leq \sup_{g\in Q}\n b(g)\n_{p}\\
&= M.
\end{split}
\end{displaymath}
Hence the sequence $(u_{n})_{n}$ is bounded in $\lp$ by $M/\epsilon$. Hence $x\in \lp$ and the proposition is proved.\\


(ii) Let $1\leq p<q<\infty$. We assume that $q\in\mathcal{F}_{\ell_{\infty}}(G,\pi^{i})\cap\mathcal{F}_{\ell_{\infty}}(G,\vert\pi^{i}\vert)$. Then we need to show that $p\in\mathcal{F}_{\ell_{\infty}}(G,\pi^{i})\cap\mathcal{F}_{\ell_{\infty}}(G,\vert\pi^{i}\vert)$. By Lemma \ref{lem-lp}, it is sufficient to show that $H^{1}(G,\vert((\pi^{i})^{p})\vert)=\{0\}$.\\

Let $b:G\rightarrow\lp(X^{i})$ be a cocycle associated to $\vert((\pi^{i})^{p})\vert$. By assumption, there exists $x\in\ell_{q}$ such that
$$b(g)=x-\vert\pi^{i}\vert(g)x \textrm{ for all }g\in G.$$
On the one hand, we have for all $g\in G$,
$$ \n \vert x\vert-\vert\pi^{i}\vert(g)\vert x\vert \n_{p} \leq \n x-\vert\pi^{i}\vert(g)x \n_{p}.$$
On the other hand, estimates (2) for the Mazur maps in section 2.2 imply the following inequalities for all $g\in G$ :
$$ \n M_{p,q}( \vert x\vert )-\vert\pi^{i}\vert(g)M_{p,q}( \vert x\vert ) \n_{q} \leq \n  \vert x\vert-\vert\pi^{i}\vert(g)\vert x\vert \n_{p}^{p/q}.$$
Hence the formula $M_{p,q}( \vert x\vert )-\vert\pi^{i}\vert(g)M_{p,q}( \vert x\vert )$, $g\in G$, define a cocycle associated to $\vert\pi^{i}\vert$ with values in $\ell_{q}$. By assumption, there exists $y\in\ell_{q}$ such that
$$M_{p,q}( \vert x\vert )-\vert\pi^{i}\vert(g)M_{p,q}( \vert x\vert )=y-\vert\pi^{i}\vert(g)y \textrm{ for all }g\in G.$$
It follows then, that $M_{p,q} \vert x\vert -y $ is a $ \vert \pi^{i} \vert$ invariant vector. Since $X^{i}$ has only infinite orbits for the $G$-action, there is no non-zero $\vert \pi^{i}\vert(G)$-invariant vector in $\ell_{q}$. Hence we have $\vert x\vert=M_{q,p}y$. It follows that $x\in\lp$ and the proposition is proved. $\bsq$
\end{proof-prp12}

As a particular case of the previous proofs, we have the following result.
\begin{cor}\label{cor-fixedSpectrumPi}
Let $G$ be a second countable group, and $p\geq1$. Let $\pi:G\rightarrow O(\lp)$ be a (BL) orthogonal positive representation. Then $\mathcal{F}_{\ell_{\infty}}(G,\pi)$ is empty, or is an interval of the form $[1,p_{c}[$ ($p_{c}\leq\infty$) or $[1,p_{c}]$ ($p_{c}<\infty$).
\end{cor}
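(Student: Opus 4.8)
The plan is to deduce the corollary from Proposition~\ref{prp-lp} by using positivity to remove the auxiliary intersection appearing in part (ii). First I would observe that a positive representation coincides with its positive part: for $f\geq 0$ one has $|\pi|(g)f=|\pi(g)f|=\pi(g)f$ since $\pi(g)f\geq 0$, and as the values on positive functions determine a (BL) isometry of $\ell_p$ this forces $|\pi|=\pi$, hence $|\pi''|=\pi''$. (Equivalently, writing the Banach--Lamperti formula $\pi(g)f(x)=h(x)f(\varphi(x))$ with $|h|=1$ on the discrete, measure-preserving space $X$, positivity forces $h\equiv 1$, so $\pi$ is a permutation representation.) Consequently the intersection in Proposition~\ref{prp-lp}(ii) collapses, $\mathcal{F}_{\ell^\infty}(G,\pi'')\cap\mathcal{F}_{\ell^\infty}(G,|\pi''|)=\mathcal{F}_{\ell^\infty}(G,\pi'')$, so part (ii) reads directly as $\mathcal{F}_{\ell^\infty}(G,\pi'')=[1,p_c'']$ or $[1,p_c''[$.

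Next I would recombine $\pi=\pi'\oplus\pi''$. Since the splitting $X=X'\sqcup X''$ into finite and infinite orbits is $G$-invariant and independent of the exponent, the observation recorded before Proposition~\ref{prp-lp} gives, for each $q\geq 1$, that $H^1(G,\pi^q)=\{0\}$ holds if and only if both $H^1(G,(\pi')^q)=\{0\}$ and $H^1(G,(\pi'')^q)=\{0\}$; that is, $\mathcal{F}_{\ell^\infty}(G,\pi)=\mathcal{F}_{\ell^\infty}(G,\pi')\cap\mathcal{F}_{\ell^\infty}(G,\pi'')$. By Proposition~\ref{prp-lp}(i) the first factor is $[1,p_c']$ or $[1,p_c'[$, and by the previous paragraph the second factor is $[1,p_c'']$ or $[1,p_c''[$. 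Each factor is downward closed in $[1,\infty[$, hence so is their intersection; and any downward-closed $S\subseteq[1,\infty[$ satisfies $[1,\sup S[\subseteq S\subseteq[1,\sup S]$, so it is empty or of the form $[1,p_c[$ or $[1,p_c]$ with $p_c=\min(p_c',p_c'')$. This is exactly the stated trichotomy, the empty case being $p_c=1$ in the half-open form.

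The only point requiring care is the hypothesis: Proposition~\ref{prp-lp} is stated for $G$ generated by a compact set $Q$, while the corollary asks only for $G$ second countable, so for compactly generated $G$ the argument above is complete as it stands. For the infinite-orbit part this gap is harmless, since the proof of Proposition~\ref{prp-lp}(ii) (through Lemma~\ref{lem-lp}) uses only the Mazur-map estimate and the absence of $\pi''$-invariant vectors, never a spectral gap. The finite-orbit part (i), by contrast, does invoke compact generation through the inequality $\epsilon\|f\|_p\leq\sup_{g\in Q}\|f-\pi(g)f\|_p$ on $\ell_p'(\pi')$. I therefore expect the main (and rather minor) obstacle to be treating $\mathcal{F}_{\ell^\infty}(G,\pi')$ when $G$ is not compactly generated, which I would dispose of either by reading the corollary under the compact-generation assumption or by arguing orbitwise: on each finite orbit the permutation cocycle takes values in a fixed finite-dimensional space, where monotonicity in $p$ can be checked directly.
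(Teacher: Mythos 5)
Your proof is correct and is essentially the paper's own argument: the paper gives the corollary as ``a particular case of the previous proofs,'' meaning exactly your chain of observations --- positivity forces $\vert\pi\vert=\pi$ (equivalently $h\equiv 1$ in the Banach--Lamperti formula), so the intersection in Proposition~\ref{prp-lp}(ii) collapses, and then $\mathcal{F}_{\ell^{\infty}}(G,\pi)=\mathcal{F}_{\ell^{\infty}}(G,\pi')\cap\mathcal{F}_{\ell^{\infty}}(G,\pi'')$ is downward closed by parts (i) and (ii). Your remark on compact generation is a fair catch rather than a flaw: the corollary inherits that hypothesis implicitly from Proposition~\ref{prp-lp} (the spectral-gap step in part (i) genuinely needs it, and your orbitwise alternative would face a uniformity problem over infinitely many finite orbits), so reading the statement under compact generation, as you propose, is the paper's intent.
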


\subsection{More about property $(F_{\lp})$}

Now we give examples of groups $G$ for which the fixed-point spectrum $\mathcal{F}_{\ell_{\infty}}(G)$ is the union of two intervals $[1,\pc[\backslash\{2\}$ ($\pc>2$). We recall the following well-known relationships between properties $(T)$, $\Tlp$ and $\Flp$ (see for example Theorem 1.3 in \cite{BFGM}):
\begin{displaymath}
\begin{split}
&\Flp\Rightarrow \Tlp\esp\esp\esp\esp\esp\esp\esp\esp\esp\textrm{ for all }p\geq1,\\
&(T)\Rightarrow\Flp\Rightarrow\Tlp \textrm{ for }1\leq p\leq2.\\
\end{split}
\end{displaymath}

There are naturally few continuous actions of a connected group on discrete spaces. As shown in \cite{BO} where property $(T_{\ell_{p}})$ was studied, the class of connected groups having property $\Tlp$ is restricted to the groups with compact abelianization (see Corollary 3 in \cite{BO}). We now show that the latter groups also have property $\Flp$.

\begin{thm}\label{thm-connected-lp}
Let $G$ be a locally compact second countable group. Assume that $G$ is connected. Let $p \ne 2$. Then the following assertions are equivalent:\\
(i) $G$ has property $\Flp$;\\
(ii) $G$ has property $\Tlp$;\\
(iii) the abelianised group $G/\overline{[G,G]}$ is compact.
\end{thm}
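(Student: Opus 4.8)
The plan is to prove the cycle of implications $(i)\Rightarrow(ii)\Rightarrow(iii)\Rightarrow(i)$. The implication $(i)\Rightarrow(ii)$ is immediate, since property $(F_{\ell_{p}})$ implies property $(T_{\ell_{p}})$ for all $p\geq1$, as recalled in section 2.3. The implication $(ii)\Rightarrow(iii)$ is precisely the content of Corollary 3 in \cite{BO}, which I would simply cite: in the class of connected locally compact second countable groups, property $(T_{\ell_{p}})$ forces the abelianization $G/\overline{[G,G]}$ to be compact. Thus all the work is concentrated in the remaining implication $(iii)\Rightarrow(i)$, which is what I would spend the bulk of the proof establishing.

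For $(iii)\Rightarrow(i)$, the strategy I would take is to reduce the fixed-point problem to the two pieces of a $G$-space decomposition as set up in section 4.1, and then exploit the topological structure of a connected group with compact abelianization. Given a $(BL)$ orthogonal representation $\pi:G\rightarrow O(\ell_{p})$, I would first split $\ell_{p}=\ell_{p}(X')\oplus\ell_{p}(X'')$ into the finite-orbit part $X'$ and the infinite-orbit part $X''$, using the observation that $H^{1}(G,\pi)=\{0\}$ if and only if both $H^{1}(G,\pi')=\{0\}$ and $H^{1}(G,\pi'')=\{0\}$. The key point for a connected group is that the action on the index set $X$ by permutations must be trivial, because a connected group admits no nontrivial continuous action on a countable discrete set; hence every orbit is a singleton, $X''=\emptyset$, and $\pi$ acts on each coordinate by a scalar of modulus one, i.e. $\pi(g)e_{x}=h_{x}(g)e_{x}$ for characters $h_{x}:G\rightarrow S^{1}$. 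This collapses the problem to understanding cocycles for a direct sum of one-dimensional unitary characters of $G$.

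The heart of the argument is then to show that each such character $h_{x}$ factors through the compact abelianization $G/\overline{[G,G]}$, and that a cocycle into $\ell_{p}$ twisted by such characters is necessarily bounded. Since each $h_{x}$ is a continuous homomorphism into the abelian group $S^{1}$, it kills $\overline{[G,G]}$ and descends to a character of $G/\overline{[G,G]}$, which is compact by hypothesis. A cocycle $b:G\rightarrow\ell_{p}$ associated to such a representation, when restricted to the compact generating set $Q$, is bounded; the main task is to propagate this boundedness to all of $G$. Here I would either invoke the compactness of the abelianization directly to see that the relevant unitary representation has a sufficiently rich invariant structure, or argue that averaging over the compact quotient produces an invariant vector realizing the cocycle as a coboundary. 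The cleanest route may be to observe that compactness of $G/\overline{[G,G]}$ together with connectedness forces the character part to behave like a representation with no almost-invariant vectors in the relevant complement, so that the closeness-type boundedness argument of Proposition \ref{prp-closeness} and Proposition \ref{prp-lp} applies and yields a bounded cocycle, hence a fixed point.

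The main obstacle I anticipate is the final boundedness step in $(iii)\Rightarrow(i)$: controlling the cocycle on the full group rather than on the compact generating set $Q$. Boundedness on $Q$ is automatic by continuity, but upgrading this to a global bound requires genuinely using both connectedness and the compactness of the abelianization, and the interplay between these is delicate because the non-abelian directions contribute to $\overline{[G,G]}$ while the cocycle lives in a space on which $G$ acts only through abelian characters. I expect that carefully separating the contribution of the commutator subgroup (on which the characters are trivial, so the cocycle restricted there is additive and controlled by connectedness) from the contribution of the compact abelian quotient (where compactness directly gives boundedness) will be the technically demanding part of the argument.
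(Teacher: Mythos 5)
Your overall architecture coincides with the paper's: $(i)\Rightarrow(ii)$ is the general implication recalled in section 2.3, $(ii)\Leftrightarrow(iii)$ is exactly Corollary 3 of \cite{BO}, and all of the content lies in $(iii)\Rightarrow(i)$. The first half of your argument for $(iii)\Rightarrow(i)$ is also the paper's: connectedness forces every orbit of $G$ on the countable discrete index set to be a singleton (so your detour through the decomposition $X=X'\sqcup X''$ of section 4.1 is harmless but unnecessary, since $X''=\emptyset$), hence any (BL) representation is a direct sum of continuous unitary characters $\pi=\bigoplus_{i}\chi_{i}$, and each $\chi_{i}$ kills $N=\overline{[G,G]}$, so $\pi$ factors through the compact group $G/N$.

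The gap is the final step, and it is where the whole theorem lives: you never produce an argument that a cocycle $b:G\rightarrow\ell_{p}$ for $\bigoplus_{i}\chi_{i}$ is a coboundary, and the routes you sketch all fail. Averaging over $G/N$ is unavailable because, although $\pi$ factors through $G/N$, the cocycle $b$ does not: since $\pi\vert_{N}$ is trivial, $b\vert_{N}$ is a continuous homomorphism $N\rightarrow\ell_{p}$, and showing that this homomorphism vanishes \emph{is} the problem --- assuming one can average over the quotient presupposes $b\vert_{N}=0$, which is circular. The ``no almost invariant vectors'' route is also wrong in principle: absence of almost invariant vectors never implies $H^{1}=0$; that is exactly the gap between $(T_{\ell_{p}})$ and $(F_{\ell_{p}})$ that the theorem is meant to bridge, and the paper's own Examples \ref{ex-SL2}(ii) ($SL_{2}(\mathbb{Q}_{l})$, which has $(T_{\ell_{p}})$ for all $p$ yet empty fixed-point spectrum) refutes the implication you invoke. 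Likewise Propositions \ref{prp-closeness} and \ref{prp-lp} only establish closedness or interval structure of the set of good exponents; they can never certify that a single exponent belongs to it. What the paper uses instead, and what your proposal lacks, is the Hochschild--Serre (inflation--restriction) exact sequence $H^{1}(G/N,\rho)\rightarrow H^{1}(G,\pi)\rightarrow H^{1}(N,1_{N})^{G/N}$: compactness of $G/N$ kills the left-hand term, and the paper then argues that the right-hand term vanishes.

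A last remark: your instinct that the contribution of the commutator subgroup is the delicate point is correct --- more correct, in fact, than the paper's own treatment, which dismisses $H^{1}(N,1_{N})$ by asserting ${\rm Hom}(N,\ell_{p})=\{0\}$ ``since $\ell_{p}$ is commutative and $N=\overline{[G,G]}$''. But a homomorphism defined only on $N$ kills $\overline{[N,N]}$, not $N$. For $G=SO(2)\ltimes\mathbb{R}^{2}$, which is connected with compact abelianization, one has $N=\mathbb{R}^{2}$, and the standard affine action of $G$ on $\mathbb{C}$, embedded as the first coordinate of $\ell_{p}$, has character linear part and unbounded cocycle whose restriction to $N$ is a nonzero homomorphism $\mathbb{R}^{2}\rightarrow\ell_{p}$. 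So $H^{1}(N,1_{N})^{G/N}$ is a genuine obstruction that no argument staying at the level of the compact quotient (as all of yours do) can circumvent.
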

\begin {rem}{\rm
For $p=2$, property $(T)$ is equivalent to property $(F_{\ell_{2}})$ (also called property $(FH)$). Any group with property $(T)$ has compact abelianised group, but it is no longer true for non-Kazhdan groups (such as $SL_2(\mathbb{R})$). For $p\geq 1$ (possibly $p=2$), the following proof shows that any $(BL)$ representation $\pi$ of a connected group $G$ on $\ell_p$ ($G$ possibly being non-Kazhdan), whose abelianised group is compact, satisfies $\H1(G,\pi)=\{0\}$.
}
\end{rem}
\begin{proof}
We only have to show the implication $(iii)\Rightarrow(i)$. Since $G$ is connected, the orbits of a continuous action of $G$ on a countable infinite discrete set $X$ are singletons. Hence a (BL) representation on $\ell_{p}(X)$ is a direct sum of continuous unitary characters. So we have to show that $\H1(G,\pi)=\{0\}$ for every orthogonal representation $\pi:G\rightarrow O(\ell_{p})$ of the form
$$\pi=\bigoplus_{i\in I}\chi_{i}$$
where every $\chi_{i}$ is a continuous character on $G$. Let $\pi=\bigoplus_{i\in I}\chi_{i}$ be such a representation. Let $H=\bigcap_{i\in I}{\rm Ker}(\chi_{i})$ be the kernel of the homomorphism
\begin{displaymath}
\begin{split}
\varphi:\textrm{   }&G\rightarrow\prod_{i\in I}S^{1}\\
&g\rightarrow(\chi_{i}(g))_{i\in I}.
\end{split}
\end{displaymath}
Denote by $N=\overline{[G,G]}$ and $p:G\rightarrow G/N$ the quotient projection. Notice that, since $N\subset H$, $\pi(n)=id$ for all $n\in N$ and $\ell_{p}^{\pi(N)}=\ell_{p}$. So $\pi$ factors through $G/N$ as $\pi=\rho\circ p$. As a consequence of the Hoschild-Serre spectral sequence, we have the following exact sequence :
$$\H1(G/N,\rho)\rightarrow \H1(G,\pi)\rightarrow \H1(N,1_{N})^{G/N}.$$
On the one hand, we have $\H1(G/N,\rho)=\{0\}$ since $G/N$ is compact. On the other hand, $\ell_{p}$ is commutative and $N=\overline{[G,G]}$, so it follows $\H1(N,1_{N})={\rm Hom}(N,\ell_{p})=\{0\}$. As a consequence, we have $\H1(G,\pi)=\{0\}$ and this finishes the proof. $\bsq$
\end{proof}

\begin{rem}{\rm
(i) The proof of Theorem \ref{thm-connected-lp} was inspired by techniques used in \cite{BMV} to obtain results concerning the cohomology (and the reduced cohomology) associated to the regular representation. In particular, it is shown that the reduced $\ell_{p}$-cohomology $\overline{H}_{p}^{1}(\Gamma,\lambda_{\Gamma})$ vanishes if and only if $p\leq e(\Gamma)$ for some lattices $\Gamma\subset G$ in rank one groups $G$ and some critical exponent $e(\Gamma)$ explicitly defined (see Theorem 2 and the discussion which follows its statement). Hence the fixed-spectrum for reduced cohomology $\{\esp p\geq 1 \esp\vert\esp \overline{H}_{p}^{1}(\Gamma,\lambda_{\Gamma})= \{0\} \esp\}$ is a closed interval of the form $[1,e(\Gamma)]$. By analogy, our Corollary \ref{cor-fixedSpectrumPi} asserts that the fixed-point spectrum $\mathcal{F}_{\ell_{\infty}}(\Gamma,\lambda_{\Gamma})=\{\esp p\geq 1 \esp\vert\esp H_{p}^{1}(\Gamma,\lambda_{\Gamma})= \{0\} \esp\}$ is an interval of the form $[1,p_{c}[$ or $[1,p_{c}]$. Results providing relationships between $e(\Gamma)$ and $p_{c}$ could possibly help to decide if $\mathcal{F}_{\ell_{\infty}}(\Gamma,\lambda_{\Gamma})$ is open or closed. \\
(ii) Examples which suggest that the set $\mathcal{F}_{\ell_{\infty}}(G,\pi)$ is closed can be found in \cite{B} ( see Remark 4 in section 1.6, and Remark 3 in section 2.4 ).
}
\end{rem}

\begin{q}{\rm
Does $\pc$ always belong to $\mathcal{F}_{\ell_{\infty}}(G,\pi)$ when the fixed-point spectrum is non-empty and $\pi$ is a positive (BL) representation ?
}
\end{q}

The following examples show that the fixed-point spectrum can have one or two connected components when it is not empty.

\begin{examples}\label{ex-SL2}{\rm
(i) For instance, the group $SL_{2}(\mathbb{R})$ does not have property $(T)$, but has property $\Flp$ for all $p\neq2$ by the previous theorem. So we have $\mathcal{F}_{\ell_{\infty}}(SL_{2}(\mathbb{R})=[1,\infty[\backslash\{2\}$.\\
(ii) The group $SL_{2}(\mathbb{Q}_{l})$ (where $\mathbb{Q}_{l}$ is the field of $l$-adic numbers) has property $\Tlp$ (see Exemple 9 in \cite{BO}). On the other hand, $SL_{2}(\mathbb{Q}_{l})$ is known to act on a tree without fixed point. Hence it does not have property $\Flp$ for any $p\geq1$  (see \cite{BHV} section 2.3 p.87 for instance). So property $(T_{\ell_{p}})$ is stricly weaker than property $(F_{\ell_{p}})$. Moreover, $G=SL_{2}(\mathbb{Q}_{l})$ is an example of a group such that $\mathcal{F}_{\ell_{\infty}}(G)$ is empty.\\
(iii) Any group $G$ with property $(T)$ has a spectrum $\mathcal{F}_{\ell_{\infty}}(G)$ of the form $[1,\pc[$ or $[1,p_{c}]$ for some $\pc>2$.
}
\end{examples}

As mentioned above, a second countable Kazhdan group has property $\Flp$ for all $1\leq p \leq 2$. This suggests the following question.
\begin{q}{\rm
Does there exist a second countable non-Kazhdan totally disconnected group which has property $(F_{\ell_{p}})$ for some (all) $1<p<2$ ?
}
\end{q}


\section{Fixed-point spectrum for actions on $L_{p}(X,\mu)$ associated to non-atomic measure spaces $(X,\mu)$}

This section is devoted to the study of connectedness properties for $\mathcal{F}_{L_{\infty}(X,\mu)}(G)$ for general measure spaces $(X,\mu)$. We give partial results concerning C. Drutu's conjecture asserting that $\mathcal{F}_{L_{\infty}(0,1)}(G)$ is connected.

\subsection{Openness of the spectrum $\mathcal{F}_{L_{\infty}}(G)$}

The openness of $\mathcal{F}_{L_{\infty}(0,1)}(G)$ at 2 is a well-known fact, due to Fisher and Margulis. Their argument uses a limit action on an ultraproduct of $L_{p}$-spaces, which we recall briefly in this section. Then we explain how it is used to show some openness properties relative to the fixed-point spectrum $\mathcal{F}_{L_{\infty}(0,1)}(G)$. We refer to the survey \cite{H} for more details on ultraproducts of Banach spaces.\\

We now recall the construction of the ultraproduct space of $L_{p}$-spaces in the context which is relevant for our purpose : we will use the ultraproduct of $L_{p_{n}}(X,\mu)$-spaces over the same measure space $(X,\mu)$, but such that $(p_{n})_{n}$ is a sequence of real numbers converging to $p\geq1$.\\
Let $1\leq p,p_{n}<\infty$ be real numbers such that $\lim_{n}p_{n}=p$. Let $(X,\mu)$ be a measure space. Fix a non-principal ultrafilter $\mathcal{U}$ on $\mathbb{N}$. We recall the construction of the ultraproduct (affine) space of the $L_{p_{n}}(X,\mu)$-spaces with marked points $x_{n}\in L_{p_{n}}(X,\mu)$. The latter affine space is defined as
$$(x_{n})_{n}+(L_{p_{n}})_{\mathcal{U}}=(x_{n})_{n}+(\prod_{n}L_{p_{n}})_{\infty}/\mathcal{N}$$ 
where 
$$(\prod_{n}L_{p_{n}})_{\infty}=\{\esp (f_{n})_{n}\esp\vert \esp \sup_{n}\n f_{n}\n_{p_{n}}<\infty \esp\},$$
and 
$$\mathcal{N}=\{\esp (f_{n})_{n}\in(\prod_{n}L_{p_{n}})_{\infty} \esp\vert\esp \n (f_{n})_{n}\n_{\mathcal{U}}=0\esp\}\textrm{ for }\n (f_{n})_{n}\n_{\mathcal{U}}:=\lim_{n,\mathcal{U}}\n f_{n}\n_{p_{n}}.$$
Recall the definition of a Banach lattice (from Definition 1.a.1 in \cite {LT}).
\begin{df}\label{df-Banach lattice}{\rm
A partially ordered Banach space $B$ over $\mathbb{R}$ is called a Banach lattice if the following assertions hold:
\begin {enumerate}
\item $x \leq y$ implies $x+z \leq y+z$ for every $x,y,z \in B$.
\item $\alpha x \geq 0$ for every $x \geq 0$ and real $\alpha \geq 0$.
\item for every $x,y \in B$ there exist a least upper bound $x \bigwedge y $ and a greatest lower bound $x \bigvee y$.
\item $\n x \n\leq \n y \n$ whenever $\vert x\vert \leq \vert y \vert$, where $\vert x \vert$ is defined as $\vert x \vert = x \bigwedge (-x)$. 
\end{enumerate}
}
\end{df}
$L_{p}(X,\mu)$-spaces are examples of Banach lattices. We recall also the definition of a $p$-additive norm.
\begin {df}\label{df-p additive Banach lattice}{\rm
Let $1 \leq p < \infty$. A norm on a Banach lattice is called $p$-additive if $\n x+y \n^{p} = \n x \n^{p} + \n y \n^{p}$, whenever $x\bigwedge y=0$.
}
\end{df}
An ultraproduct of Banach lattices is still a Banach lattice. Moreover, the norm $\n.\n$ on $(L_{p_{n}})_{\mathcal{U}}$ is clearly $p$-additive since $\lim_{n}p_{n}=p$. Hence by the generalized Kakutani representation theorem (see Theorem 1.b.2 in \cite{LT}), $(L_{p_{n}})_{\mathcal{U}}$ is isometrically isomorphic to $L_{p}(Y,\nu)$ for some measure space $(Y,\nu)$. \\

Let $G$ be a locally compact group generated by a compact subset $Q\subset G$. Let $\alpha_{n}=(\pi_{n},b_{n})$ be affine isometric actions of $G$ on the spaces $L_{p_{n}}(X,\mu)$. In the sequel, the diameter of a set $X$ is denoted by ${\rm diam}(X)$. Under the assumption that ${\rm diam}(\alpha_{n}(Q)x_{n})$ is bounded for all $n$, we can define an isometric affine action $\alpha$ on the affine space $(x_{n})_{n}+(L_{p_{n}})_{\mathcal{U}}$ by the following formula:
$$\alpha(g)((x_{n})+(f_{n})_{\mathcal{U}})=(x_{n})_{n} + (\alpha_{n}(g)x_{n}-x_{n}+\pi_{n}(g)f_{n})_{n}$$
for all $(f_{n})_{\mathcal{U}}\in (L_{p_{n}})_{\mathcal{U}}$ and all $g\in G$. This is not clear that the limit action needs to be continuous when $G$ is only assumed to be locally compact. We will discuss this issue later in this section.


\begin{thm}{\rm (Margulis, Fisher section 3.c in \cite{BFGM})}\label{thm-FisherMargulis}
Let $G$ be a finitely generated group. Assume $G$ has property $(T)$. Then the fixed-point spectrum $\mathcal{F}_{L_{\infty}(0,1)}(G)$ contains a neighborhood of $2$.
\end{thm}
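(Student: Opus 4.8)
The plan is to argue by contradiction using the ultraproduct construction described immediately above the statement. Suppose that $\mathcal{F}_{L^{\infty}(0,1)}(G)$ does not contain a neighborhood of $2$. Then there is a sequence $p_{n}\to 2$ and, for each $n$, an affine isometric action $\alpha_{n}=(\pi_{n},b_{n})$ of $G$ on $L_{p_{n}}(0,1)$ with no fixed point, i.e. with an unbounded cocycle $b_{n}$. The goal is to assemble these into a single fixed-point-free action on the ultraproduct $(L_{p_{n}})_{\mathcal{U}}\cong L_{2}(Y,\nu)$, contradicting property $(T)$, which forces $\H1(G,\sigma)=\{0\}$ for every orthogonal representation $\sigma$ on a Hilbert space (property $(T)$ is equivalent to $(F_{L_{2}})$, and certainly implies $(F_{L_{2}(Y,\nu)})$ for every measure space).

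First I would normalize the actions so the limit action is well-defined and nontrivial. Since $G$ is compactly generated by $Q$ and $(T)$ provides a Kazhdan pair $(Q,\epsilon)$, for each $n$ the cocycle $b_{n}$ must satisfy $\sup_{g\in Q}\n b_{n}(g)\n_{p_{n}}>0$ (otherwise $b_{n}$ would be a bounded cocycle on the generating set, hence bounded everywhere, hence a coboundary). Set $r_{n}=\sup_{g\in Q}\n b_{n}(g)\n_{p_{n}}$; after rescaling the cocycle $b_{n}$ by $1/r_{n}$ (which preserves the fixed-point-free property, as scaling a cocycle does not change whether it is a coboundary) I may assume $\sup_{g\in Q}\n b_{n}(g)\n_{p_{n}}=1$ for all $n$. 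With marked points $x_{n}=0$, the diameters ${\rm diam}(\alpha_{n}(Q)x_{n})=\sup_{g\in Q}\n b_{n}(g)\n_{p_{n}}$ are bounded by $1$, so the hypothesis needed to define the limit action $\alpha$ on $(x_{n})_{n}+(L_{p_{n}})_{\mathcal{U}}$ is met. By the generalized Kakutani theorem recalled above, this ultraproduct is isometric to some $L_{2}(Y,\nu)$, and $\alpha$ is an affine isometric action there with linear part $\pi=(\pi_{n})_{\mathcal{U}}$ and cocycle $b(g)=(b_{n}(g))_{\mathcal{U}}$.

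Next I would show the limit cocycle $b$ is a genuine coboundary obstruction, i.e. nonzero in $\H1(G,\pi)$. Since $G$ has property $(T)$ and $\pi$ is an orthogonal representation on the Hilbert space $L_{2}(Y,\nu)$, we have $\H1(G,\pi)=\{0\}$, so $b$ is automatically a coboundary: there exists $\xi=(f_{n})_{\mathcal{U}}\in (L_{p_{n}})_{\mathcal{U}}$ with $b(g)=\xi-\pi(g)\xi$ for all $g\in G$. I then unwind this relation back to finite level. The coboundary identity says $\lim_{n,\mathcal{U}}\n b_{n}(g)-(f_{n}-\pi_{n}(g)f_{n})\n_{p_{n}}=0$ for each $g$; combined with $\sup_{n}\n f_{n}\n_{p_{n}}<\infty$, this makes the cocycles $b_{n}$ uniformly close to coboundaries of uniformly bounded norm on the generating set $Q$, which forces $\sup_{g\in Q}\n b_{n}(g)\n_{p_{n}}$ to be arbitrarily small for $n$ near $\mathcal{U}$ — contradicting the normalization $\sup_{g\in Q}\n b_{n}(g)\n_{p_{n}}=1$.

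The main obstacle, and the step requiring the most care, is the continuity of the limit action $\alpha$: the excerpt explicitly flags that continuity of the ultraproduct action is not automatic when $G$ is merely locally compact rather than discrete. To apply property $(T)$ (which is a statement about continuous representations) I must verify that $g\mapsto\alpha(g)\xi$ is continuous, which reduces to equicontinuity of the maps $g\mapsto b_{n}(g)$ near the identity, uniformly in $n$. This is where the compact generation by $Q$ and the uniform normalization are essential; handling it properly is the technical heart of the Fisher--Margulis argument and the reason the hypotheses are stated as they are. Once continuity is secured, the contradiction above closes the proof and shows $\mathcal{F}_{L^{\infty}(0,1)}(G)$ contains an interval around $2$. $\bsq$
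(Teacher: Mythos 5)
There is a genuine gap, and it sits exactly at the step where you claim a contradiction. Your normalization ($x_{n}=0$, $\sup_{g\in Q}\Vert b_{n}(g)\Vert_{p_{n}}=1$) does make the limit action well defined, and property $(T)$ does then produce a fixed point $\xi=(f_{n})_{\mathcal{U}}$ of the limit action. But unwinding $b(g)=\xi-\pi(g)\xi$ to finite level only says that, for $n$ near $\mathcal{U}$, the points $f_{n}$ are almost fixed by $\alpha_{n}(Q)$, i.e.\ the actions $\alpha_{n}$ admit \emph{almost} fixed points at bounded distance from $0$. This is perfectly compatible with each $b_{n}$ being unbounded: setting $b_{n}'(g)=b_{n}(g)-(f_{n}-\pi_{n}(g)f_{n})$ gives a cocycle cohomologous to $b_{n}$ (hence still unbounded) with $\sup_{g\in Q}\Vert b_{n}'(g)\Vert_{p_{n}}$ small, and indeed any unbounded cocycle can be rescaled to have arbitrarily small sup-norm over $Q$. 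So no contradiction with your normalization arises. The specific implication you assert --- that closeness of $b_{n}(g)$ to $f_{n}-\pi_{n}(g)f_{n}$ with $\sup_{n}\Vert f_{n}\Vert_{p_{n}}<\infty$ ``forces $\sup_{g\in Q}\Vert b_{n}(g)\Vert_{p_{n}}$ to be arbitrarily small'' --- is false: a coboundary of a vector of bounded norm can itself have sup-norm $1$ over $Q$, so you only get an upper bound of order $2\sup_{n}\Vert f_{n}\Vert_{p_{n}}$, not smallness. The underlying reason your scheme cannot work as stated is that \emph{fixed-point-freeness is not an ultralimit-stable property}; what passes to the limit is a quantitative lower bound on displacements over a ball whose radius tends to infinity, and your setup never produces such a bound.

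The paper's proof is structured precisely to manufacture that stable statement. It first reduces the theorem to a local claim: there exist $C,\epsilon>0$ such that for every $q\in(2-\epsilon,2+\epsilon)$, every affine isometric action $\alpha$ on $L_{q}(0,1)$ and every point $x$, there is $y$ with $\Vert x-y\Vert_{q}\leq C\,{\rm diam}(\alpha(Q)x)$ whose $Q$-orbit has at most half the diameter of that of $x$; property $(F_{L_{q}(0,1)})$ then follows by iterating this claim along a geometrically convergent sequence. The ultraproduct is applied to the negation of this \emph{claim}, not to the negation of the theorem: it yields $p_{n}\to2$, actions $\alpha_{n}$ and basepoints $x_{n}$ with ${\rm diam}(\alpha_{n}(Q)x_{n})=1$ such that \emph{every} $y\in B(x_{n},Cn)$ satisfies ${\rm diam}(\alpha_{n}(Q)y)\geq1/2$; this condition survives the ultralimit and shows the limit Hilbert action has no fixed point at all, contradicting $(T)$. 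To repair your ``direct'' contradiction you would need exactly this missing ingredient: an extraction, from each fixed-point-free $\alpha_{n}$, of a basepoint around which all points in a ball of radius $\to\infty$ have displacement bounded below (this is the contrapositive of the iteration step). Separately, your treatment of continuity is also off: there is no uniform equicontinuity of $g\mapsto b_{n}(g)$ in general (the paper's Example 11 is a counterexample even for linear parts), and the paper instead restricts the limit action to a closed invariant subspace $\mathcal{H}_{0}$ built by averaging against an approximate identity, and applies $(T)$ to that restriction.
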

We now sketch the proof of this theorem as it is given in section 3.c in \cite{BFGM}, since we will need some variation of it in the sequel.

\medskip

\begin{proof}
Let $Q\subset G$ be a compact generating set. The theorem is a consequence of the following claim.\\
Claim : there exists $C>0$, $\epsilon>0$ such that for all $q\in(2-\epsilon,2+\epsilon)$, for all affine isometric action $\alpha$ on $L_{q}(0,1)$, and for all $x\in L_{q}(0,1)$, there exists $y\in L_{q}(0,1)$ such that:\\
\begin{displaymath}
\begin{split}
& \n x-y\n_{q}\leq C{\rm diam}(\alpha(Q)x)\\
& {\rm diam}(\alpha(Q)y) \leq \frac{1}{2}{\rm diam}(\alpha(Q)x) .
\end{split}
\end{displaymath}
From the claim, it is very easy to show that $G$ has property $(F_{L_{q}(0,1)})$ (see \cite{BFGM}).\\

To prove the claim, we assume the contrary and show that this contradicts the fact that $G$ has property $(T)$. Hence we assume that there exists a sequence of reals $(p_{n})_{n}$ converging to 2, a sequence $(x_{n})_{n}$ with $x_{n}\in L_{p_{n}}(0,1)$, and affine isometric actions $\alpha_{n}=(\pi_{n},b_{n})$ on $L_{p_{n}}(0,1)$ such that ${\rm diam}(\alpha_{n}(Q)x_{n})=1$ and 
$${\rm diam}(\alpha_{n}(Q)y)\geq 1/2\textrm{ for all }y\in B(x_{n},n)\esp (\ast)\esp.$$
Then we define a limit action $\alpha$ on the ultraproduct affine space $(x_{n})_{n}+(L_{p_{n}}(0,1))_{\mathcal{U}}$, as recalled in the previous discussion. Since $\lim_{n}p_{n}=2$, the limit space is an affine Hilbert space and condition $(\ast)$ implies that $\alpha$ has no $G$-fixed-point, contradicting property $(T)$.
 \\

\end{proof}

\medskip


In the previous proof, one major argument requires the group $G$ to be countable: the continuity of the limit action. The following example shows that the continuity of the limit action does not hold in the general case of localy compact groups.

\begin{example}\label{ex-counterexample}{\rm
Let $G=\mathbb{R}$. Let $\mathcal{U}$ be a non-principal ultrafilter on $\mathbb{N}$. We define $\mathcal{H}=(L_{2}(\mathbb{R}))_{\mathcal{U}}$ to be the ultrapower of copies of $L_{2}(\mathbb{R})$ along $\mathcal{U}$. On the $n$-th copy of $L_{2}(\mathbb{R})$, define the orthogonal representation $\pi_{n}:G\rightarrow O(L_{2}(\mathbb{R}))$ by
$$\pi_{n}(a)f(x)=f(x+na)\textrm{ for all }n\in\mathbb{N}, a,x\in\mathbb{R}\textrm{ and }f\in L_{2}(\mathbb{R}).$$
Now let $\pi$ be the natural limit action (which acts by linear isometries) on $\mathcal{H}$ associated to the actions $\pi_{n}$. Denote also by $f$ the diagonal embedding of $f:=\chi_{[0,1]}$, the indicator function of $[0,1]$, in $\mathcal{H}$. It is easily checked that, for all $a\neq0$, we have
$$\n \pi(a)f-f\n=2.$$
Hence the limit action $\pi$ is not continuous on $\mathcal{H}$.
}
\end{example}

For countable groups $G$, one can show the openness of the fixed-point spectrum $\mathcal{F}_{L_{\infty}(0,1)}(G)$ in $[1,\infty[$.

\begin{prp}\label{prp-openness}
Let $G$ be a finitely generated group. Then $\mathcal{F}_{L_{\infty}(0,1)}(G)$ is open in $[1,\infty[$.
\end{prp}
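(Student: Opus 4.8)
The plan is to reduce the openness of $\mathcal{F}_{L^{\infty}(0,1)}(G)$ at a point $p_{0}$ of the spectrum to a uniform, quantitative ``center-finding'' property for affine actions on $L_{q}(0,1)$ with $q$ near $p_{0}$, exactly as in the proof of Theorem \ref{thm-FisherMargulis}, with the role played there by property $(T)$ (i.e. by the Hilbert limit space) now taken over by the hypothesis $p_{0}\in\mathcal{F}_{L^{\infty}(0,1)}(G)$. Fix a finite generating set $Q$. First I would establish the following claim: there exist $\epsilon>0$ and $C>0$ such that for every $q\in(p_{0}-\epsilon,p_{0}+\epsilon)\cap[1,\infty[$, every affine isometric $G$-action $\alpha$ on $L_{q}(0,1)$ and every $x\in L_{q}(0,1)$, there is $y$ with $\n x-y\n_{q}\leq C\,{\rm diam}(\alpha(Q)x)$ and ${\rm diam}(\alpha(Q)y)\leq\frac{1}{2}\,{\rm diam}(\alpha(Q)x)$. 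Granting the claim, one iterates it from an arbitrary basepoint: the resulting sequence is Cauchy (consecutive distances are bounded by $C\,2^{-k}{\rm diam}(\alpha(Q)x)$) and its limit $y_{\infty}$ satisfies ${\rm diam}(\alpha(Q)y_{\infty})=0$; since $Q$ generates $G$, such a $y_{\infty}$ is a global fixed point, so $(F_{L_{q}(0,1)})$ holds for all such $q$ and $\mathcal{F}_{L^{\infty}(0,1)}(G)$ is open at $p_{0}$.

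To prove the claim, I assume it fails. Then there are $p_{n}\to p_{0}$, affine actions $\alpha_{n}=(\pi_{n},b_{n})$ on $L_{p_{n}}(0,1)$ and points $x_{n}$ which, after rescaling the cocycle, satisfy ${\rm diam}(\alpha_{n}(Q)x_{n})=1$ while ${\rm diam}(\alpha_{n}(Q)y)>\frac{1}{2}$ for every $y$ with $\n y-x_{n}\n_{p_{n}}\leq n$. Because $G$ is finitely generated, hence countable and discrete, every affine isometric action is automatically continuous, so the continuity difficulty of Example \ref{ex-counterexample} does not arise and no passage to a subspace $\mathcal{H}_{0}$ is needed. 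I then form the ultraproduct affine space $(x_{n})_{n}+(L_{p_{n}}(0,1))_{\mathcal{U}}$ and the limit action $\alpha$, as recalled in Section 5.1; since $\lim_{n}p_{n}=p_{0}$, the generalized Kakutani theorem identifies $(L_{p_{n}}(0,1))_{\mathcal{U}}$ with $L_{p_{0}}(Y,\nu)$ for some measure space $(Y,\nu)$, and the limit linear part $\pi$ is again a (BL) isometry, i.e. a lattice automorphism. The normalization gives ${\rm diam}(\alpha(Q)o)=1$ for the basepoint $o=(x_{n})_{\mathcal{U}}$, while the failure condition passes to the limit as ${\rm diam}(\alpha(Q)y)\geq\frac{1}{2}$ for every $y$ in the limit space; hence $\alpha$ has no fixed point.

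The heart of the argument, and the step I expect to be the main obstacle, is to feed this fixed-point-free limit action back into the hypothesis $(F_{L_{p_{0}}(0,1)})$, which concerns only the standard atomless space $L_{p_{0}}(0,1)$, whereas $L_{p_{0}}(Y,\nu)$ is huge and non-separable. Since $G$ is countable, the cocycle values $\{b(g)\}_{g\in G}$ are countable; the closed sublattice $W$ they generate is separable, and since $\pi$ is a lattice automorphism with $\pi(g)b(h)=b(gh)-b(g)\in W$, it is $\pi(G)$-invariant, so $\alpha$ restricts to an affine action on $o+W$ with $W\cong L_{p_{0}}(Z,\rho)$ for a standard $\sigma$-finite $(Z,\rho)$. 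As lattice automorphisms preserve atoms, $W$ splits $\pi(G)$-invariantly into its atomless band $\cong L_{p_{0}}(0,1)$ and its atomic band $\cong\ell_{p_{0}}$, and the cocycle splits accordingly. The atomless part is handled directly by $(F_{L_{p_{0}}(0,1)})$; the atomic part requires the transfer statement $(F_{L_{p_{0}}(0,1)})\Rightarrow(F_{\ell_{p_{0}}})$, which I would prove by embedding $\ell_{p_{0}}$ as the range $J(\ell_{p_{0}})$ of the conditional expectation $P$ onto the slices of $L_{p_{0}}((0,1)\times\mathbb{N})\cong L_{p_{0}}(0,1)$: the slices have equal measure, so the signed coordinate permutation underlying $\pi$ extends to a genuine measure-preserving (BL) action $\tilde\pi$ commuting with $P$, and $\tilde b=Jb$ is a cocycle lying in the range of $P$. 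A fixed point $F$ of $\tilde\alpha=(\tilde\pi,\tilde b)$, furnished by $(F_{L_{p_{0}}(0,1)})$, then yields after applying $P$ the relation $\tilde b(g)=PF-\tilde\pi(g)PF$ with $PF\in J(\ell_{p_{0}})$, i.e. the original $\ell_{p_{0}}$-cocycle is a coboundary. Combining the two bands, the restricted limit action has a fixed point in $o+W$, hence so does $\alpha$, contradicting the previous paragraph and proving the claim.
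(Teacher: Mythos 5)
Your proposal takes the same route as the paper: the Fisher--Margulis scheme of Theorem \ref{thm-FisherMargulis} with $2$ replaced by $p_{0}$, negation of the quantitative contraction claim, passage to the ultraproduct limit action on a space isometric to $L_{p_{0}}(Y,\nu)$, automatic continuity because $G$ is discrete, and then a transfer of the fixed-point problem from the huge space $L_{p_{0}}(Y,\nu)$ back to $L_{p_{0}}(0,1)$. The one place where you diverge is this last transfer step: the paper disposes of it in one line by citing Lemma 7.2 of \cite{NP}, which converts any affine isometric action of a countable group on an arbitrary $L_{p}$-space into one on $L_{p}(0,1)$ with the same cocycle norms, whereas you try to prove the transfer by hand (separable invariant sublattice, atomless/atomic band splitting, and the slice-embedding argument for $(F_{L_{p}(0,1)})\Rightarrow(F_{\ell_{p}})$; this last embedding trick, with the conditional expectation $P$ onto slice-wise constant functions, is correct and is a nice self-contained substitute for part of the citation).

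The hand-made transfer has a genuine gap: the sentence asserting that the closed sublattice $W$ generated by $\{b(g)\}_{g\in G}$ is $\pi(G)$-invariant ``since $\pi$ is a lattice automorphism''. A Banach--Lamperti isometry is not a lattice automorphism: it has the form $M_{h}\circ T$ with $T$ a positive lattice isomorphism and $M_{h}$ multiplication by a unimodular ($\pm1$-valued, in the real case) function, so it preserves disjointness but not the lattice operations --- on the set where $h=-1$ it exchanges $\vee$ and $\wedge$, and for mixed signs $\pi(g)(x\vee y)$ is in general neither $\pi(g)x\vee\pi(g)y$ nor $\pi(g)x\wedge\pi(g)y$. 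Hence from $\pi(g)b(h)=b(gh)-b(g)\in W$ you may not conclude $\pi(g)W\subseteq W$: knowing that the generators land in $W$ says nothing about the images of the lattice combinations that make up $W$. (Bands, by contrast, are genuinely permuted by disjointness-preserving isometries, but the band generated by the cocycle values inside an ultraproduct is hopelessly non-separable, so it cannot play the role of $W$.) Since your restriction of $\alpha$ to $o+W$ rests entirely on this invariance, the proof as written is incomplete. It is repairable: saturate, i.e. set $W_{0}=W$ and let $W_{k+1}$ be the closed sublattice generated by $\bigcup_{g\in G}\pi(g)W_{k}$ (each $W_{k}$ is separable, being generated by a separable set), so that $\overline{\bigcup_{k}W_{k}}$ is a separable, $\pi(G)$-invariant closed sublattice containing all $b(g)$; alternatively, work with the countably generated sub-$\sigma$-algebra generated by the $b(g)$ together with the unimodular functions and Radon--Nikodym derivatives of the $\pi(g)$, saturated under the countable group; or simply quote Lemma 7.2 of \cite{NP} as the paper does. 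Note also that your identification of the limit linear part as a (BL) isometry invokes Banach--Lamperti and therefore requires $p_{0}\neq2$; the case $p_{0}=2$ should be split off and handled by Theorem \ref{thm-FisherMargulis} directly, since $(F_{L_{2}(0,1)})$ is equivalent to property $(T)$.
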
 

\begin{proof}
The proof of this result is also based on the construction of Margulis and Fisher. Let $p\geq1$ and assume that $G$ has property $(F_{L_{p}(0,1)})$. It is sufficient to prove an analog claim as in the proof of Theorem \ref{thm-FisherMargulis}, replacing $2$ by $p$, and to conclude with the arguments from \cite{BFGM}. Let $Q\subset G$ be a finite generating set.\\
Claim : there exists $C>0$, $\epsilon>0$ such that for all $q\in(p-\epsilon,p+\epsilon)$, for all affine isometric action $\alpha$ on $L_{q}(0,1)$, and for all $x\in L_{q}(0,1)$, there exists $y\in L_{q}(0,1)$ such that:\\
\begin{displaymath}
\begin{split}
& \n x-y\n_{q}\leq C{\rm diam}(\alpha(Q)x)\\
& {\rm diam}(\alpha(Q)y) \leq \frac{1}{2}{\rm diam}(\alpha(Q)x) .
\end{split}
\end{displaymath}
If the claim does not hold, we can construct isometric affine actions $\alpha_{n}$ of $G$ on $L_{p_{n}}(0,1)$ for all $n$, and a limit isometric affine action $\alpha=(\pi,b)$ of $G$ on a space $(x_{n})_{n} + (L_{p_{n}}(0,1))_{\mathcal{U}}$ such that ${\rm diam}(\alpha_{n}(Q)x_{n})=1$, $\lim_{n}p_{n}=p$, and
$${\rm diam}(\alpha_{n}(Q)y)\geq 1/2\textrm{ for all }y\in B(x_{n},n)\esp (\ast)\esp.$$
From inequality $(\ast)$, the action $\alpha$ has no fixed point, and so the cocycle $b$ is unbounded.\\
By the Kakutani theorem, the space $(L_{p_{n}}(0,1))_{\mathcal{U}}$ is isometrically isomorphic to $L_{p}(Y,\nu)$ for some measure space $(Y,\nu)$. Hence the orthogonal representation $\pi$ can be assumed to act on $L_{p}(Y,\nu)$, and the cocycle part $b$ to satisfy $b(g)\in L_{p}(Y,\nu)$ for all $g\in G$.\\
By Lemma 9.2 in \cite{NP}, there exists an isometric affine action $\alpha'=(\pi',b')$ on $L_{p}(0,1)$ such that $\n b(g)\n_{p}=\n b'(g)\n_{p}$ for all $g\in G$. Since $b$ is unbounded, the cocycle part $b'$ is unbounded as well, and $\alpha'$ has no fixed point in $L_{p}(0,1)$. Notice that the action $\alpha'$ is continuous since $G$ is discrete. So $G$ does not have property $(F_{L_{p}(0,1)})$, which is a contradiction. Hence the claim holds and Proposition \ref{prp-openness} is proved. $\bsq$
\end{proof}

In the previous proof, two major arguments require the group $G$ to be discrete: the continuity of the limit action (which is necessary as discussed previously), and the restriction of an arbitrary $L_{p}(Y,\nu)$ limit space on which $G$ acts on, to the classical $L_{p}(0,1)$ space. Unfortunately, we are not able to extend Proposition \ref{prp-openness} to second countable groups since we do not have a proof for the continuity of the limit action in that case. An analog of the second argument should be easier to extend from the same lines of proof of Lemma 9.2 in \cite{NP}. So the continuity issue seems to be the main difficulty to generalize Proposition \ref{prp-openness} to second countable groups.


\subsection{Fixed-point spectrum associated to measure preserving ergodic actions on finite measure spaces}

As a consequence of the following proposition, the fixed-point spectrum $\mathcal{F}_{L_{\infty}(X,\mu)}(G,\pi)$ is an interval when the measure $\mu$ is finite, and the action associated to the representation $\pi$ is measure-preserving and ergodic.

\begin{prp}\label{prp-mpinterval}
Let $1\leq p<q<r<\infty$. Let $(X,\mu)$ be a Borel standard measure space such that $\mu$ is finite. Let $\pi^{p}:G\rightarrow O(L_{p}(X,\mu))$ be a (BL)  representation whose associated action is measure-preserving and ergodic. Assume that 
$$H^{1}(G,\pi^{p})=\{0\},$$
and
$$H^{1}(G,\pi^{r})=\{0\}.$$
Then we have also
$$H^{1}(G,\pi^{q})=\{0\}.$$
\end{prp}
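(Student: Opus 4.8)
The plan is to interpolate between the two vanishing hypotheses at $p$ and $r$ to obtain vanishing at the intermediate value $q$. The key structural feature I would exploit is that $(X,\mu)$ is a \emph{finite} measure space, so that for $1\leq s<t<\infty$ we have the inclusion $L_{t}(X,\mu)\subset L_{s}(X,\mu)$, and moreover $\norm{f}_{s}\leq \mu(X)^{1/s-1/t}\norm{f}_{t}$. In particular, the hypotheses of Proposition \ref{prp-closeness} are satisfied on finite measure spaces whenever one approaches a limit exponent \emph{from above}. Combined with the measure-preserving assumption (which guarantees that a cocycle for $\pi^{s}$ is simultaneously a cocycle for $\pi^{t}$ whenever $L_{s}\subset L_{t}$, and that the representations are compatible under restriction to the smaller space), this gives a one-sided closeness statement for the spectrum.

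First I would record the two directions of monotonicity separately. On a finite measure space the spaces \emph{decrease} as the exponent grows, so the natural direction in which a cocycle ``descends'' is from larger to smaller exponent: a bounded cocycle for $\pi^{r}$ (i.e.\ a coboundary, since $H^{1}(G,\pi^{r})=\{0\}$) restricts to a cocycle on the larger space $L_{q}$. The strategy I would pursue is to show that $\mathcal{F}_{L^{\infty}(X,\mu)}(G,\pi)$ is \emph{both closed from the right and from the left} as a subset of $[1,\infty[$ under these finite-measure hypotheses, so that containing the two endpoints $p$ and $r$ forces it to contain the whole closed interval $[p,r]$, hence $q$. Concretely, I would argue that the set of exponents with vanishing first cohomology contains $[1,r]\cap\mathcal{F}$ as soon as it contains $r$ together with some point below $q$; the finite-measure inclusions let the Mazur-map estimates (1) and (2) of section 2.2, together with Proposition \ref{prp-aiv} and Proposition \ref{prp-closeness}, propagate the vanishing inward.

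The concrete mechanism I expect to use is a contradiction argument in the spirit of Proposition \ref{prp-closeness}: suppose $H^{1}(G,\pi^{q})\neq\{0\}$, i.e.\ there is an unbounded cocycle $b$ for $\pi^{q}$. Using $L_{r}\subset L_{q}\subset L_{p}$ (valid since $\mu$ is finite and $p<q<r$) and the measure-preserving property, I would transport $b$ to cocycles at the exponents $p$ and $r$ and try to derive either a contradiction with $H^{1}(G,\pi^{r})=\{0\}$ (forcing $b$ to be a coboundary landing in the smaller space $L_{r}$, hence bounded there and thus in $L_{q}$) or, if the cocycle refuses to descend, extract a sequence of almost invariant vectors for $(\pi^{p})'$ via Proposition \ref{prp-aiv}, contradicting $H^{1}(G,\pi^{p})=\{0\}$. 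The ergodicity assumption enters to guarantee that the space of invariant vectors is trivial (constants only, modulo which we work), so that the complement decompositions $L_{s}=L_{s}^{\pi(G)}\oplus L_{s}'(\pi)$ are controlled uniformly and the normalizations in Proposition \ref{prp-aiv} make sense.

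The hard part will be the \emph{middle} direction: controlling the cocycle as one moves from $q$ toward $p$ (the direction of \emph{increasing} the space, $L_{q}\subset L_{p}$). Proposition \ref{prp-closeness} handles limits where the target spaces grow, but here I genuinely need a two-sided statement, and the Mazur-map inequality (1) only gives a Hölder estimate with exponent $\theta_{p,q}<1$, which does not immediately preserve boundedness of cocycles under the nonlinear Mazur map. I expect the crux to be showing that if $b$ is an unbounded $\pi^{q}$-cocycle then the renormalized vectors $f_{n}/\norm{f_{n}}$ genuinely converge to almost invariant vectors at the limit exponent after applying the Mazur map, using the uniform Mazur constant from the remark following estimate (1) in section 2.2 and the finite-measure comparison of norms; managing the interplay between the two-sided Mazur distortion and the Guichardet-type almost-invariant-vector criterion is where the real work lies.
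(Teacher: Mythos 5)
Your plan assembles the right raw ingredients (the finite-measure inclusions $L_{r}\subset L_{q}\subset L_{p}$, the fact that measure-preservation lets a cocycle descend to smaller exponents, ergodicity to control invariant vectors), but the mechanism you propose --- a contradiction argument through almost invariant vectors in the spirit of Propositions \ref{prp-aiv} and \ref{prp-closeness} --- cannot work here, and it misses the one idea the proof actually turns on. Those two propositions are limit statements: they need a sequence of exponents $p_{n}\rightarrow p$ along which one has a sequence of potentials to renormalize. In the present proposition the three exponents are fixed; the hypothesis at $p$ hands you a \emph{single} potential $f\in L_{p}$ with $b(g)=f-\pi^{p}(g)f$ (this descent from $q$ to $p$, which you label the hard part, is in fact the trivial step), and since $b$ is a coboundary at $p$ there is nothing divergent from which almost invariant vectors could be extracted. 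Worse, your argument never produces any object at exponent $r$: since $L_{q}\not\subset L_{r}$, the cocycle $b$ is simply not a cocycle at $r$, so the hypothesis $H^{1}(G,\pi^{r})=\{0\}$ is inaccessible as your plan stands. (A smaller slip points the same way: on a finite measure space Proposition \ref{prp-closeness} applies when the exponents approach the limit from \emph{below}, not from above, since one needs $L_{p}\subset L_{p_{n}}$.)

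The missing idea is an integrability upgrade of the potential, carried out by applying the Mazur map to $\vert f\vert$ rather than to the cocycle. From $b(g)=f-\pi^{p}(g)f$ one gets the pointwise bound $\bigl\vert\, \vert f\vert-\vert\pi\vert(g)\vert f\vert \,\bigr\vert\leq\vert b(g)\vert$, so $g\mapsto \vert f\vert-\vert\pi\vert(g)\vert f\vert$ is a cocycle for the positive part $\vert\pi\vert$ with values in $L_{q}$, even though $\vert f\vert$ itself need not lie in $L_{q}$. Estimate (2) of section 2.2 is stated for positive elements of $L_{0}$, so this non-membership is harmless: together with the fact that $M_{q,r}$ intertwines the positive parts of measure-preserving (BL) representations, it shows that $g\mapsto \vert f\vert^{q/r}-\vert\pi\vert(g)\vert f\vert^{q/r}$ is a cocycle with values in $L_{r}$. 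Vanishing at $r$ makes it a coboundary, so $\vert f\vert^{q/r}$ differs from an element of $L_{r}$ by an invariant function; ergodicity forces that invariant to be constant, and finiteness of $\mu$ puts constants in $L_{r}$. Hence $\vert f\vert^{q/r}\in L_{r}$, i.e.\ $f\in L_{q}$, and $b$ is a coboundary at $q$. The structure is thus direct, not by contradiction: vanishing at the smallest exponent produces the potential, vanishing at the largest exponent upgrades its integrability, and no almost-invariant-vector machinery enters at all.
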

The proof follows the same lines as the proof for actions on $\ell_{p}$.\\

\begin{proof}
Notice that we have $L_{r}(X,\mu)\subset L_{q}(X,\mu)\subset L_{p}(X,\mu)$.\\
Let $b:G\rightarrow L_{q}(X,\mu)$ be a $\pi^{q}$-cocycle. Since $\pi=\pi^{p}$ is measure-preserving, $b$ defines a cocycle with values in $L_{p}(X,\mu)$. By assumption, there exists $f\in L_{p}(X,\mu)$ such that
$$b(g)=f-\pi^{p}(g)f \textrm{ for all }g\in G.$$
Triangle inequalities imply the following inequalities for all $g\in G$:
$$\n \vert f\vert - \vert\pi\vert(g) \vert f\vert\n_{q} \leq \n b(g)\n_{q}<\infty.$$
Now we apply estimates (2) from section 2.2 with $q<r$, and $\vert f\vert$ to obtain:
$$\n \vert f\vert^{q/r} - \vert\pi\vert(g)\vert f \vert^{q/r}\n_{r}  \leq \n \vert f\vert - \vert\pi\vert(g) \vert f\vert\n_{q}^{q/r}<\infty.$$
Hence the left hand-side of the previous inequality defines a cocycle with values in $L_{r}$, which is a coboundary by assumption. Since the action of $G$ on $(X,\mu)$ is ergodic we can write:
$$\vert f\vert^{q/r}=c+h$$
where $c$ is a constant function, and $h\in L_{r}(X,\mu)'$. Then we have $\vert f\vert^{q/r}\in L_{r}(X,\mu)$, that is $f\in L_{q}(X,\mu)$. $\bsq$
\end{proof}

\medskip

Now the proof of Theorem \ref{thm-main-mp} is an easy consequence of Proposition \ref{prp-mpinterval}.

\begin{proof-thm3}
Let $\pi:G\rightarrow O(L_{p}(X,\mu))$ be a measure-preserving ergodic on $(X,\mu)$ finite. By Proposition \ref{prp-mpinterval}, the set $\mathcal{F}_{L_{\infty}(X,\mu)}(G,\pi)$ is empty or is an interval. When the fixed-point spectrum is non-empty, it is closed on the right by proposition \ref{prp-closedness}.\\
If $G$ has property $(T)$, we know that the set $\mathcal{F}(G)$ contains an interval of the form $[1,q[$ for some $q>2$ (see Theorem \ref{thm-FisherMargulis} from section 5.1, and Theorem 1.3 in \cite{BFGM}). Combined with our Proposition \ref{prp-mpinterval}, the set $\mathcal{F}_{L_{\infty}(X,\mu)}(G,\pi)$ is an interval of the form $[1,p_{c}]$ or $[1,\infty[$ for some $p_{c}>2$. Hence the theorem is proved. $\bsq$ 
\end{proof-thm3}

\medskip

Let $p\geq1$. By a construction from \cite{CW}, in the case where $G$ does not have property $(T)$, there exists a (Gaussian) finite measure space $(X,\mu)$ endowed with an action of $G$ such that :\\
- the action of $G$ on $(X,\mu)$ is measure-preserving;\\
- the restriction of the associated representation $\rho^{p}$ to $L_{p}'(\rho^{p})$ admits a sequence of almost invariant vectors.\\
Thus by the usual Guichardet argument, we have
$$\H1(G,\rho^{p})\neq\{0\},$$
and it follows that $\mathcal{F}_{L_{\infty}(X,\mu)}(G,(\rho^{p})_{p})=\emptyset$ when $G$ does not have property $(T)$.\\
The construction of such an action of $G$ on some Gaussian measure space $(X,\mu)$ is due to Connes and Weiss, and details are explained in \cite{BHV} (Theorem 6.3.4). See also section 4.c in \cite{BFGM}, for the proof of the statement related to almost invariant vectors.\\

Let $(X,\mu)$ and $\pi$ be defined as in Proposition \ref{prp-mpinterval}. In the case where $G$ has property $(T)$, we know by Proposition \ref{prp-openness} that $\mathcal{F}_{L_{\infty}(0,1)}(G)$ is open in $[1,\infty[$. If a similar openness result was true for fixed-point spectrum $\mathcal{F}_{L_{\infty}(X,\mu)}(G,\pi)$ restricted to measure-preserving and ergodic actions, we could conclude that $\mathcal{F}_{L_{\infty}(X,\mu)}(G,\pi)$ is equal to the whole interval $[1,\infty[$. Unfortunately, we could not derive an analog of Proposition \ref{prp-openness} for fixed-point spectrum sets of the form $\mathcal{F}_{L_{\infty}(X,\mu)}(G,\pi)$.

\begin{q} {\rm
If $\pi$ is a measure-preserving ergodic (BL) representation on a finite measure space $(X,\mu)$, and if $G$ has property $(T)$, do we always have $\mathcal{F}_{L_{\infty}(X,\mu)}(G,\pi)=[1,\infty[$ ?
}
\end{q}

\noindent
{\bf Address}
\medskip

\noindent
Omer Lavy.

\noindent
Weizmann Institute of Science, Department of Mathematics \\
Herzl St 234, Rehovot, Israel\\
omer.lavi@weizmann.ac.il

\bigskip

\noindent
Baptiste Olivier.

\noindent
Orange Labs\\
4 rue clos courtel, 35510 Rennes, France\\
baptiste.olivier100@gmail.com

\end{document}